\tikzset{
    partial ellipse/.style args={#1:#2:#3}{
        insert path={+ (#1:#3) arc (#1:#2:#3)}
    }
}
\lstdefinestyle{mystyle}{
    language=Python,
    basicstyle=\small\ttfamily,
    showspaces=false,
    showstringspaces=false,
    numbers=left,
    numberstyle=\tiny,
    numbersep=5pt
}
\numberwithin{equation}{section}
\theoremstyle{plain}
\newtheorem{theorem}{Theorem}[section]
\newtheorem{lemma}[theorem]{Lemma}
\newtheorem{corollary}[theorem]{Corollary}
\numberwithin{equation}{section}
\newtheorem{proposition}[theorem]{Proposition}
\theoremstyle{definition}
\newtheorem{definition}[theorem]{Definition}
\newtheorem{example}[theorem]{Example}
\newtheorem{remark}[theorem]{Remark}
\theoremstyle{remark}
\newcommand{\RR}{\mathbb{R}}
\begin{document}

\title[Certain classes of polynomial vector fields on the torus]{Characterization and dynamics of certain classes of polynomial vector fields on the torus}

\author[S. Jana]{Supriyo Jana}
\address{Department of Mathematics, Indian Institute of Technology Madras, India}
\email{supriyojanawb@gmail.com}

\subjclass[2020]{34A34, 34C40, 34C45, 34C14}
 \renewcommand{\keywordsname}{Keywords and phrases}

\keywords{Polynomial vector field, First integral, Kolmogorov vector field, pseudo-type-$n$ vector field, Invariant meridian, Invariant parallel, Periodic orbit, Limit cycle, Lie bracket}

\date{\today}
\dedicatory{}

\abstract In this paper, we classify all polynomial vector fields in $\RR^3$ of degree up to three such that their flow makes the torus $$\mathbb{T}^2=\{(x,y,z)\in \RR^3:(x^2+y^2-a^2)^2+z^2-1=0\}~\mbox{with}~a\in (1,\infty)$$ invariant. We also classify cubic Kolmogorov vector fields on $\mathbb{T}^2$ and prove that they exhibit a rational first integral.  We study `pseudo-type-$n$' vector fields on $\mathbb{T}^2$ and show that any such vector field is completely integrable. We prove that the Lie bracket of any two quadratic vector fields on $\mathbb{T}^2$ is completely integrable. We explicitly find all cubic vector fields on $\mathbb{T}^2$ which achieve the sharp bounds for the number of invariant meridians and parallels. We present necessary and sufficient conditions when invariant meridians and parallels of cubic vector fields on $\mathbb{T}^2$ are periodic orbits or limit cycles. We discuss invariant meridians and parallels of pseudo-type-$n$ vector fields as well. Moreover, we characterize the singular points of a class of polynomial vector fields on $\mathbb{T}^2$.   
\endabstract

\maketitle


\section{Introduction}
Polynomial vector fields have been intensely explored since 1900 due to the second part of Hilbert's 16th problem \cite{hilbert1900,ilyashenko2002,MR1334338,li2003hilbert}. People also studied invariant algebraic curves of polynomial vector fields in the plane due to Darboux Integrability Theory \cite{darboux1878memoire}, which states that a sufficient number of invariant algebraic curves guarantees the existence of a first integral. Llibre and Zhang \cite{LlZh02} extended the Darboux Integrability Theory to vector fields on any regular algebraic hypersurface in $\RR^n$. The first integral is also an important part of the qualitative theory of differential systems, see for instance \cite{huang2018meromorphic,llibre2021integrability,llibre2010rational,valls2019liouvillian} and references therein. We remark that a first integral of the associated vector field of a differential system reduces the dimension
of the system by one, making the analysis of the system easier.

We consider the polynomial differential system
\begin{equation}\label{system}
    \frac{dx}{dt}=P,~\frac{dy}{dt}=Q,~\frac{dz}{dt}=R
\end{equation}
 in $\RR^3$ and its associated polynomial vector field 
\begin{equation}\label{vector-field}
     \chi = P\frac{\partial}{\partial x}+Q\frac{\partial}{\partial y}+R\frac{\partial}{\partial z},
\end{equation}
where $P,Q,R\in \mathbb{R}[x,y,z]$. For simplicity, we may write $\chi{:=} (P,Q,R)$. The number $n{:=}\max\{\deg P,~\deg Q,~\deg R\}$ is called the degree of the polynomial vector field \eqref{vector-field}.

Planar Kolmogorov systems were introduced by Kolmogorov \cite{kolmogorov1936sulla} to describe the interaction
between two species occupying the same ecological habitat. Later, polynomial Kolmogorov systems were generalized to higher dimensions with the following form:
\begin{equation*}
    \frac{dx_i}{dt}=x_iP_i(x_1,...,x_n)~\mbox{where}~P_i\in \RR[x_1,...,x_n].
\end{equation*}
These systems may exhibit quite complex dynamical behaviors, see \cite{hirsch1989systems} and the references therein (e.g., references 6, 10, 11, and 12). The associated vector field of a Kolmogorov system is called a Kolmogorov vector field.

The algebraic hypersurface given by $\{f=0\}$ with $f\in \RR[x,y,z]$ is called invariant if $\chi f=Kf$ for some $K \in \mathbb{R}[x,y,z]$. The polynomial $K$ is called the cofactor of $\chi$ for the hypersurface $\{f=0\}$. Moreover, $\chi$ is called a vector field on $\{f=0\}$. In this case, if any solution curve of \eqref{system} intersects the hypersurface $\{f=0\}$ then the entire curve lies in $\{f=0\}$. Polynomial vector fields on the various algebraic hypersurfaces have been studied in \cite{BoLlVa13, gutierrez2002darbouxian, llibre2012rational, LM18, LlMu21, LlRe13, LlZh11}.

Llibre and Medrado \cite{LlMe11} studied polynomial vector fields on the two-dimensional torus $\mathbb{T}^2$ given by 
$$\mathbb{T}^2=\{(x,y,z)\in \RR^3:(x^2+y^2-a^2)^2+z^2-1=0\}~\mbox{where}~a\in (1,\infty).$$ They investigated the sharp upper bounds for the number of invariant meridians, parallels and discussed limit cycles of quadratic vector fields on $\mathbb{T}^2$. The definition of invariant meridians and invariant parallels can be found in Section \ref{sec:prel}. Note that limit cycles are periodic orbits that are isolated in the set of all periodic orbits of the vector field.

In this paper, we classify arbitrary cubic vector fields, cubic Kolmogorov vector fields, and the Lie bracket of two quadratic vector fields on $\mathbb{T}^2$. We discuss the existence of first integrals and characterize the invariant meridians and parallels of these vector fields. In particular, for a cubic vector field on $\mathbb{T}^2$, we provide necessary and sufficient conditions when invariant meridians are limit cycles and invariant parallels are periodic orbits. We prove the following in Section \ref{sec:pol-vfld}.
\begin{proposition}\label{hom-lemma}
    Suppose that $\chi=(P,Q,R)$ is a vector field on $\mathbb{T}^2$ such that $P,Q,R$ are homogeneous polynomials with $\deg P=\deg Q=n$ and $\deg R\leq n$. Then $R=0$.
\end{proposition}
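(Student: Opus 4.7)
The invariance condition $\chi f = Kf$ for $f = u^2+z^2-1$ (with $u := x^2+y^2-a^2$) expands to
\[
4u(xP+yQ) + 2zR = Kf
\]
for some $K \in \RR[x,y,z]$; since $\chi f$ has total degree at most $n+3$ and $f$ has degree $4$, we have $\deg K \leq n-1$. My plan is to extract two divisibility facts about $R$ and then iterate them. First, matching the total-degree-$(n+3)$ components gives $4(x^2+y^2)(xP+yQ) = K^{(n-1)}(x^2+y^2)^2$, so $(x^2+y^2) \mid (xP+yQ)$ in $\RR[x,y,z]$.

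Next, reduce the identity modulo $x^2+y^2$: using $u\equiv -a^2$ and $(xP+yQ)\equiv 0\pmod{x^2+y^2}$, it becomes $2zR \equiv K(z^2+a^4-1) \pmod{x^2+y^2}$. Passing to $\CC$ and restricting to the component $\{y=ix\}$ of $\{x^2+y^2=0\}$ yields $2zR(x,ix,z) = K(x,ix,z)(z^2+a^4-1)$ in $\CC[x,z]$. Since $a>1$ forces $a^4-1\neq 0$, the factor $z^2+a^4-1$ is coprime to $2z$ in $\CC[x,z]$, so $(z^2+a^4-1)\mid R(x,ix,z)$. Set $m:=\deg R$, write $R(x,ix,z)=(z^2+a^4-1)S(x,z)$, and decompose $S=\sum_d S^{(d)}$ into homogeneous parts; then the relation
\[
z^2 S^{(d-2)} + (a^4-1)S^{(d)} = 0 \quad \text{for } d\neq m
\]
inductively forces $S^{(d)}=0$ for $d<m$ (using $a^4\neq 1$), while a nonzero $S^{(m)}$ would, via $S^{(d)}=-z^2 S^{(d-2)}/(a^4-1)$, give $S^{(m+2k)}\neq 0$ for every $k\geq 1$, contradicting polynomiality of $S$. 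Hence $R(x,ix,z)=0$, so $(y-ix)\mid R$ over $\CC$; by complex conjugation also $(y+ix)\mid R$, whence $(x^2+y^2)\mid R$ in $\RR[x,y,z]$.

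To finish, iterate: write $xP+yQ=(x^2+y^2)M$ and $R=(x^2+y^2)R'$; substituting into the original identity gives $(x^2+y^2)[4uM+2zR'] = Kf$, and since $f\equiv z^2+a^4-1 \not\equiv 0 \pmod{x^2+y^2}$ the polynomials $x^2+y^2$ and $f$ are coprime, so $(x^2+y^2)\mid K$. Cancelling yields $4uM+2zR' = K'f$, an identity of exactly the same shape as the original but with $M$ of degree $n-1$ in place of $xP+yQ$ (degree $n+1$) and $R'$ of degree $m-2$ in place of $R$. Re-running the top-degree and mod-$(x^2+y^2)$ arguments on this new identity gives $(x^2+y^2)\mid M$ and $(x^2+y^2)\mid R'$, so $(x^2+y^2)^2\mid R$. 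Continuing, $(x^2+y^2)^j\mid R$ at each stage $j$; since $R$ has finite degree $m$, the iteration forces $R=0$ once $2j>m$.

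The main subtlety is the homogeneity-forced-vanishing argument in the mod-$(x^2+y^2)$ step, which is the only place the hypothesis $a>1$ (equivalently $a^4\neq 1$) is used essentially. A boundary case to handle is when $M$ or an iterated analogue vanishes: the reduced identity $2zR'=K'f$ together with $\gcd(z,f)=1$ then forces $f\mid R'$, and since $R'$ is homogeneous while $f$ has a nonzero constant term, a similar homogeneous-vs-inhomogeneous matching (again via $a^4\neq 1$) forces $R'=0$ directly.
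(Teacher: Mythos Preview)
Your proof is correct and takes a genuinely different route from the paper's. The paper argues directly: since $P,Q,R$ are homogeneous, the left side of the invariance identity $4u(Px+Qy)+2Rz=Kf$ has monomials only in degrees $n+3$, $n+1$, and $\deg R+1$; a short degree count (assuming $Px+Qy\neq 0$) then pins down $\deg R=n-2$ and forces $K$ to be homogeneous of degree $n-1$ with $2Rz=(a^4-1)K$. Subtracting this relation leaves $4(x^2+y^2-a^2)(Px+Qy)=K\bigl((x^2+y^2-a^2)^2+z^2-a^4\bigr)$, and divisibility by $x^2+y^2-a^2$ gives an immediate contradiction since $K$ is homogeneous. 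The case $Px+Qy=0$ is then disposed of in one line.

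Your argument, by contrast, never tries to determine $K$ globally. Instead you run a descent: reduce modulo $x^2+y^2$, pass to the line $y=ix$ over $\CC$, and exploit the incompatibility between the homogeneous $R(x,ix,z)$ and the inhomogeneous divisor $z^2+a^4-1$ to force $R(x,ix,z)=0$; this peels off a factor $x^2+y^2$ from both $xP+yQ$ and $R$, and the whole setup reproduces itself two degrees lower. The paper's route is shorter and ends with a single contradiction, but your approach is more structural---once the ``homogeneous divisible by inhomogeneous implies zero'' mechanism is isolated, it drives every stage of the iteration and handles the boundary case $M=0$ by the same principle. In both proofs the only essential numerical input is $a^4\neq 1$.
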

Therefore, we define $\chi=(P,Q,R)$ as a \textit{pseudo-type-$n$ vector field} if $P,Q$ are homogeneous polynomials of degree $n$ and $R=0$. We classify all pseudo-type-$n$ vector fields on $\mathbb{T}^2$ as well.

In Section \ref{sec:inv-mer-par}, we provide a different proof of the sharp bound of invariant meridians for a polynomial vector field on $\mathbb{T}^2$, see \Cref{thm:bound-mer}.
We give a necessary condition for the existence of singular points of a quadratic vector field on $\mathbb{T}^2$. Finally, we discuss the local structure of trajectories on $\mathbb{T}^2$ around the isolated singular points of the vector field $(Ay,-Ax,0)$ where $A\in \RR[x,y,z]$.

\section{Statement of the main results}\label{sec:main-results}
In this section, we state our main results. The following three theorems classify cubic vector fields, the Lie bracket of two quadratic vector fields, and pseudo-type-$n$ vector fields on $\mathbb{T}^2$.
\begin{theorem}\label{thm:cubic-vfld}
    Let $\chi=(P,Q,R)$ be a cubic vector field in $\RR^3$. Then $\chi$ is a vector field on $\mathbb{T}^2$ if and only if there exist $f,K'\in \RR[x,y,z]$ and $\beta,\gamma \in \RR$ such that 
\begin{equation}\label{eq:cubic-vfld}
    \begin{split}
        P=&\frac{1}{4}Kx+fy+\beta z,\\
        Q=&\frac{1}{4}Ky-fx+\gamma z,\\
        R=&\frac{1}{2}K'(-a^2(x^2+y^2)+z^2+a^4-1)-2(\beta x+\gamma y)(x^2+y^2-a^2),
    \end{split}
\end{equation}
where $K=K'z$ and $\deg f,\deg K\leq 2$. Moreover, $K$ is the cofactor of $\chi$ for $\mathbb{T}^2$.
\end{theorem}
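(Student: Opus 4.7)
The plan is to split the proof into the two implications. The sufficient direction is a direct substitution: one computes
\begin{equation*}
\chi f = 4u(xP+yQ) + 2zR, \qquad u := x^2+y^2-a^2,
\end{equation*}
for the $(P,Q,R)$ in \eqref{eq:cubic-vfld}, uses $x^2+y^2 = u+a^2$ to simplify, and verifies that the result equals $K(u^2+z^2-1)$ with $K = K'z$. This is mechanical and I would present it as a short calculation. The substantive work is the converse, outlined below.

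Assume $\chi f = Kf$ with $\chi$ cubic. A degree count gives $\deg K \leq 2$. Setting $z=0$ yields $4u(xP+yQ)|_{z=0} = K|_{z=0}(u^2-1)$. Since $u$ is irreducible in $\RR[x,y]$ and coprime to $u^2-1$, it must divide $K|_{z=0}$; combined with $\deg K|_{z=0} \leq 2$ this gives $K|_{z=0} = cu$ for some $c \in \RR$. Substituting back produces $(xP+yQ)|_{z=0} = \tfrac{c}{4}(u^2-1)$; comparing constant terms in $x,y$ (the left side has none, the right has $\tfrac{c}{4}(a^4-1)$) and using $a>1$ forces $c=0$. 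Hence $K = K'z$ with $\deg K' \leq 1$.

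Next, from $(xP+yQ)|_{z=0}=0$ and $\gcd(x,y)=1$ in $\RR[x,y]$, I would write $P|_{z=0} = yf$ and $Q|_{z=0} = -xf$ for some $f \in \RR[x,y]$ of degree $\leq 2$, so $P = yf + zP^*$, $Q = -xf + zQ^*$ with $P^*, Q^*$ of degree $\leq 2$. The defining equation then reduces, after cancelling $z$, to
\begin{equation*}
4u(xP^* + yQ^*) + 2R = K'(u^2 + z^2 - 1).
\end{equation*}
Reducing modulo $u$ gives $2R \equiv K'(z^2-1) \pmod u$, so $R = \tfrac12 K'(z^2-1) + \tfrac12 uS$ for some polynomial $S$; substituting back and cancelling $u$ yields $S = K'u - 4(xP^* + yQ^*)$, whence $R = \tfrac12 K'(u^2+z^2-1) - 2u(xP^* + yQ^*)$.

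The final and most delicate step is to enforce $\deg R \leq 3$. Since $u^2 - (x^2+y^2)^2$ has degree $2$, the degree-$4$ and degree-$5$ homogeneous parts of $R$ come only from $\tfrac12 K'(x^2+y^2)^2 - 2(x^2+y^2)(xP^*+yQ^*)$, and requiring these to vanish forces $xP^* + yQ^*$ to agree with $\tfrac14 K'(x^2+y^2)$ in homogeneous degrees $2$ and $3$. Using the standard fact that any identity $xA + yB = \tfrac14 L(x^2+y^2)$ has general solution $A = \tfrac14 Lx + yh$, $B = \tfrac14 Ly - xh$, I would solve degree-by-degree to obtain $P^* = \tfrac14 K'x + yH + \beta$ and $Q^* = \tfrac14 K'y - xH + \gamma$ for some $H \in \RR[x,y,z]$ of degree $\leq 1$ and constants $\beta,\gamma \in \RR$. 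Absorbing $zH$ into $f$ (replacing $f$ by $f+zH$, still of degree $\leq 2$) and substituting into the formulas for $P$, $Q$, and $R$ recovers exactly \eqref{eq:cubic-vfld}. I expect this degree-tracking step to be the main obstacle: while each algebraic manipulation is routine, one must simultaneously account for every homogeneous component while keeping the free parameters $f$, $K'$, $\beta$, $\gamma$ cleanly separated.
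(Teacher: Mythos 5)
Your proposal is correct; every step checks out, including the delicate final degree-tracking. Your route differs from the paper's in organization, though it shares the two key ingredients (restricting to $z=0$, and the parametrization of solutions of $xA+yB=\tfrac14 L(x^2+y^2)$ via \Cref{sum-2-poly-zero}). The paper works almost entirely by comparing homogeneous components: it extracts $P^{(3)},Q^{(3)}$ from the degree-$6$ terms, $P^{(2)},Q^{(2)}$ from degree $5$, deduces $K^{(2)}(x,y,0)=K^{(1)}(x,y,0)=0$ and $P^{(0)}=Q^{(0)}=0$ by pairing the degree-$4$/degree-$2$ and degree-$3$/degree-$1$ terms of the $z=0$ slice, and then recovers $R^{(0)},\dots,R^{(3)}$ one at a time from the remaining degree comparisons. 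You instead argue structurally: irreducibility of $u=x^2+y^2-a^2$ and coprimality with $u^2-1$ give $K|_{z=0}=cu$ at once, the constant term kills $c$, so $z$ divides both $K$ and $xP+yQ$; after cancelling $z$ you solve for $R$ in closed form modulo $u$ and only then impose $\deg R\leq 3$ to pin down $P^*,Q^*$. Your version buys a single clean expression for $R$ and far fewer coefficient comparisons (and makes transparent why the cofactor must be divisible by $z$), at the cost of invoking irreducibility/coprimality in $\RR[x,y]$; the paper's version is more mechanical but produces each homogeneous piece explicitly along the way. Both correctly recover \eqref{eq:cubic-vfld}, and your absorption of $zH$ into $f$ at the end matches the paper's $f:=A+B+\alpha$.
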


\begin{theorem}\label{lie-bracket}
   Let $X$ and $Y$ be two quadratic vector fields on $\mathbb{T}^2$. Then, the Lie bracket $[X,Y]=(Ay,-Ax,0)$ for some $A\in \RR[x,y,z]$ with $\deg A\leq 2$. Moreover, $[X,Y]$ is completely integrable.
\end{theorem}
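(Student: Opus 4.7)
The plan is to first restrict Theorem~\ref{thm:cubic-vfld} to degree~$2$ to obtain a clean classification of quadratic vector fields on $\mathbb{T}^2$, then compute the cofactor and the $z$-component of $[X,Y]$ directly, and finally re-apply Theorem~\ref{thm:cubic-vfld} to the cubic vector field $[X,Y]$ to read off its structure. Setting $h := -a^2(x^2+y^2)+z^2+a^4-1$, the constraint $\deg P,\deg Q,\deg R\le 2$ in the cubic classification forces $K'$ to be a constant and $\beta=\gamma=0$, since the term $-2(\beta x+\gamma y)(x^2+y^2-a^2)$ in $R$ is already of degree $3$. Hence any quadratic vector field on $\mathbb{T}^2$ takes the form
\[
X = \left(\tfrac{c_1 z}{4}x + f_1 y,\; \tfrac{c_1 z}{4}y - f_1 x,\; \tfrac{c_1}{2}h\right)
\]
with $c_1\in\RR$ and $\deg f_1\le 1$, and its cofactor is $K_1=c_1 z$; likewise, $Y$ is described by parameters $c_2, f_2$ with cofactor $K_2=c_2 z$.

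Next, I use the Leibniz identity $[X,Y](g) = (X(K_2)-Y(K_1))\,g$, which yields the cofactor of $[X,Y]$ as $c_2 X(z) - c_1 Y(z) = c_2 R_1 - c_1 R_2$; this vanishes identically because $R_1$ and $R_2$ are both proportional to $h$. The third component is handled by a parallel computation: the structural identity $xP_i+yQ_i=\tfrac{c_i z}{4}(x^2+y^2)$ gives $X(h) = -\tfrac{a^2 c_1 z}{2}(x^2+y^2) + 2zR_1$, and symmetrically for $Y(h)$; combining these yields $c_2 X(h) - c_1 Y(h) = 2z(c_2 R_1 - c_1 R_2) = 0$, so $[X,Y]_3 = \tfrac{c_2}{2}X(h) - \tfrac{c_1}{2}Y(h) = 0$.

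Since $[X,Y]$ is a cubic vector field on $\mathbb{T}^2$, Theorem~\ref{thm:cubic-vfld} presents it in parameters $f, K', \beta, \gamma$ with $\deg f\le 2$. The vanishing cofactor forces $K'=0$, and the vanishing third component forces $(\beta x+\gamma y)(x^2+y^2-a^2)=0$ in $\RR[x,y,z]$, hence $\beta=\gamma=0$. Therefore $[X,Y] = (fy,-fx,0) = (Ay,-Ax,0)$ with $A:=f$ of degree at most $2$. Complete integrability is then immediate: along the flow $\dot z = 0$ and $x\dot x + y\dot y = Axy - Axy = 0$, so $z$ and $x^2+y^2$ are two functionally independent polynomial first integrals in $\RR^3$. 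The main technical step is verifying that the cofactor and $[X,Y]_3$ vanish as \emph{polynomials} rather than merely modulo the defining polynomial of $\mathbb{T}^2$; both simplifications rest on the proportionality of $R_1$ and $R_2$, which is the essential structural consequence of restricting Theorem~\ref{thm:cubic-vfld} to degree~$2$.
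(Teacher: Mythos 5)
Your proof is correct, and although it follows the same overall skeleton as the paper's argument (classify the quadratic vector fields, show the third component of $[X,Y]$ vanishes, note that $[X,Y]$ is again a vector field on $\mathbb{T}^2$, extract the form $(Ay,-Ax,0)$, and exhibit the first integrals $z$ and $x^2+y^2$), the two middle steps are handled genuinely differently. The paper merely asserts that a direct computation gives $R=0$ and $\deg P,\deg Q\le 3$, then works with the invariance equation $4(x^2+y^2-a^2)(Px+Qy)=K((x^2+y^2-a^2)^2+z^2-1)$: it deduces $x^2+y^2-a^2\mid K$, kills the constant in $K=c(x^2+y^2-a^2)$ by inspecting the $z^2$-coefficient, and finally applies \Cref{sum-2-poly-zero} to $Px+Qy=0$. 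You instead make the vanishing of the third component explicit via the structural identity $xP_i+yQ_i=\tfrac{c_iz}{4}(x^2+y^2)$, compute the cofactor of $[X,Y]$ as $X(K_2)-Y(K_1)=c_2R_1-c_1R_2=0$ (the Leibniz identity underlying \Cref{lie-invariant-hypersurface}), and then read the conclusion off \Cref{thm:cubic-vfld}: the zero cofactor forces $K'=0$ (using that the cofactor is unique because $\RR[x,y,z]$ is a domain), and the zero third component then forces $\beta=\gamma=0$. Your route has the merit of replacing the paper's ``one can show'' by a short displayed argument and of reusing the cubic classification rather than re-deriving a divisibility step; the paper's route is more self-contained at that point, needing only \Cref{sum-2-poly-zero} rather than the full strength of \Cref{thm:cubic-vfld} applied to $[X,Y]$. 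Both arguments correctly treat the relevant identities as identities in $\RR[x,y,z]$ rather than merely on the torus.
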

\begin{theorem}\label{thm:hom-vfld}
Suppose that $\chi=(P,Q,0)$ is a pseudo-type-$n$ vector field in $\RR^3$. Then $\chi$ is a vector field on $\mathbb{T}^2$ if and only if there exists a homogeneous polynomial $A$ of degree $n-1$ such that $P=Ay,~Q=-Ax$. Furthermore, any pseudo-type-$n$ vector field on $\mathbb{T}^2$ is completely integrable.
\end{theorem}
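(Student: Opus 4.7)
The plan is to reduce the invariance condition $\chi f = K f$, where $f = (x^2+y^2-a^2)^2 + z^2 - 1$, to the purely polynomial identity $xP + yQ = 0$ in $\RR[x,y,z]$; from this the stated form of $\chi$ follows by elementary divisibility, after which two explicit first integrals yield complete integrability. The first step is the direct computation
$$\chi f = P\,\partial_x f + Q\,\partial_y f = 4(xP + yQ)(x^2 + y^2 - a^2)$$
(using $R = 0$), so the invariance condition becomes $f \mid (xP+yQ)(x^2+y^2-a^2)$ in $\RR[x,y,z]$.

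Next I would establish irreducibility of $f$ in $\CC[x,y,z]$. Regarded as a quadratic in $z$, $f = z^2 + (x^2+y^2-a^2-1)(x^2+y^2-a^2+1)$; the two factors of the $z$-constant term are coprime in $\CC[x,y]$ (they differ by the nonzero constant $2$), so this constant term is not a square and $f$ is irreducible as a quadratic in $z$ over $\CC(x,y)$; Gauss's lemma then upgrades this to irreducibility in $\CC[x,y,z]$. Since $\deg f = 4 > 2 = \deg(x^2+y^2-a^2)$, $f$ cannot divide $x^2+y^2-a^2$, and primality of $f$ forces $f \mid (xP + yQ)$. Now $xP + yQ$ is homogeneous of degree $n+1$, while $f$ has nonzero pieces in the three distinct degrees $0, 2, 4$. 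If $xP + yQ = f\cdot g$ with $g \ne 0$, matching the top and bottom homogeneous components on both sides would force both $f$ and $g$ to be homogeneous, a contradiction. Hence $xP + yQ = 0$.

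The rest is routine. From $xP = -yQ$ in the UFD $\RR[x,y,z]$ with $\gcd(x,y) = 1$, we get $y \mid P$ and $x \mid Q$; writing $P = y\widetilde A$ and cancelling gives $Q = -x\widetilde A$, so $P = Ay$ and $Q = -Ax$ for a unique homogeneous $A$ of degree $n-1$. The converse is a direct check: such $\chi$ yields $\chi f = 4(xAy - yAx)(x^2+y^2-a^2) = 0$, so $\chi$ is tangent to $\mathbb{T}^2$ with zero cofactor. Complete integrability then follows at once: $\chi(z) = R = 0$ and $\chi(x^2+y^2) = 2(xP + yQ) = 0$, so $z$ and $x^2 + y^2$ are first integrals, and their differentials are linearly independent wherever $(x,y) \ne (0,0)$, which holds in a neighborhood of $\mathbb{T}^2$ since $a > 1$ ensures the $z$-axis does not meet $\mathbb{T}^2$. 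The main subtlety I anticipate is the homogeneity argument ruling out any nonzero homogeneous polynomial divisible by the inhomogeneous $f$; the other ingredients are either routine computations or standard facts about the UFD $\RR[x,y,z]$.
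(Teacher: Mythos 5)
Your argument is correct, but the key step---showing $xP+yQ=0$---takes a genuinely different route from the paper's. The paper defers this to \cite[Theorem 4.11]{JaSa23}, and in the spirit of its own proof of \Cref{hom-lemma} the argument there is a pure degree count on the identity $4(x^2+y^2-a^2)(Px+Qy)=Kf$: if $Px+Qy\neq 0$ it is homogeneous of degree $n+1$, which forces $K$ to be homogeneous of degree $n-1$, and then the degree-$(n-1)$ component of the right-hand side is $(a^4-1)K$; since the left-hand side has no such component and $a>1$, this kills $K$ and hence $Px+Qy$. You instead prove that the torus polynomial $f$ is irreducible over $\CC$ (a reusable fact the paper never records) and combine primality of $f$ with the observation that no nonzero homogeneous polynomial can be divisible by $f$, whose homogeneous support is $\{0,2,4\}$; this is sound and arguably more conceptual, at the cost of the irreducibility proof. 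One small point to tighten there: coprimality of the two factors $x^2+y^2-a^2\mp 1$ does not by itself show their product is a non-square (a product of coprime squares is a square); you also need that each factor is not an associate of a square, which holds because each is an irreducible quadric over $\CC$. The divisibility step ($y\mid P$, $x\mid Q$) is exactly the paper's Lemma \ref{sum-2-poly-zero}, and your integrability argument coincides with the paper's: $z$ and $x^2+y^2$ are first integrals whose differentials are independent off the $z$-axis, which does not meet $\mathbb{T}^2$.
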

Theorem \ref{thm:cubic-vfld}, Theorem \ref{lie-bracket}, and Theorem \ref{thm:hom-vfld} are proved in Section \ref{sec:pol-vfld}.

We provide a necessary and sufficient condition when a cubic vector field on $\mathbb{T}^2$ having finitely many invariant meridians reaches the sharp bound of the number of invariant meridians, see Theorem \ref{4-mer}. Moreover, we study when those invariant meridians are limit cycles. The next result is proved in Section \ref{sec:inv-mer-par}.
\begin{theorem}\label{mer-periodic}
    Suppose that the cubic vector field \eqref{eq:cubic-vfld} on $\mathbb{T}^2$ has four invariant meridians. Then the invariant meridians are limit cycles if and only if $K'=0$ has no  solution on those meridians. Moreover, the limit cycles appear as stable or unstable alternately.
\end{theorem}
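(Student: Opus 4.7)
The plan is to parametrize $\mathbb{T}^{2}$ by cylindrical-like coordinates $(\phi,\psi)\in[0,2\pi)^{2}$ via $x=r\cos\phi$, $y=r\sin\phi$, $z=\sin\psi$, $r^{2}=a^{2}+\cos\psi$, rewrite \eqref{eq:cubic-vfld} in these coordinates, and then combine an explicit restriction-to-meridian computation with Poincaré--Bendixson on the annular strips cut out by the four invariant meridians.

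In these coordinates, combining \eqref{eq:cubic-vfld}, the relation $K=K'z$, and the torus identity $-a^{2}(x^{2}+y^{2})+z^{2}+a^{4}-1=-(r^{2}-a^{2})r^{2}$, a direct computation yields $\dot\psi=-2M$, where $M:=\tfrac14K'r^{2}+\beta x+\gamma y$, and $\dot\phi\,r^{2}=-fr^{2}+z(\gamma x-\beta y)$. According to Theorem~\ref{4-mer}, the existence of four invariant meridians forces $\beta=\gamma=0$ and reduces $f$ to a homogeneous quadratic in $(x,y)$ whose four simple zeros in $\phi$ coincide with the meridian angles $\phi_{1}<\cdots<\phi_{4}$. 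On each meridian $\mathcal{M}_{i}$ we then have $\dot\psi=-\tfrac12K'r^{2}$, and since $a>1$ keeps $r^{2}>0$ throughout $\mathbb{T}^{2}$, the singular set of $\chi$ on $\mathcal{M}_{i}$ coincides with $\{K'=0\}\cap\mathcal{M}_{i}$. In particular, if $K'$ vanishes somewhere on $\mathcal{M}_{i}$ the meridian contains an equilibrium and cannot be a limit cycle, which establishes the "only if" direction.

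For the converse, assume $K'\neq 0$ on every $\mathcal{M}_{i}$. The four meridians partition $\mathbb{T}^{2}$ into four compact invariant annuli, and in each open annulus $\dot\phi=-f$ has constant sign because the $\mathcal{M}_{i}$ exhaust the zero set of $f$ on $\mathbb{T}^{2}$; strict monotonicity of $\phi$ rules out interior periodic orbits, while $P=Q=0$ with $\beta=\gamma=0$ forces $f=0$, so equilibria of $\chi$ on $\mathbb{T}^{2}$ lie only on the meridians and, by hypothesis, on none of them. Poincaré--Bendixson applied to each closed annulus then forces every interior orbit's $\omega$- and $\alpha$-limit sets to consist of boundary meridians, so each $\mathcal{M}_{i}$ is isolated in the set of periodic orbits and is therefore a limit cycle. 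For alternation, simplicity of the four zeros of $f$ in $\phi$ means $\dot\phi$ changes sign across every $\mathcal{M}_{i}$; if the signs of $\dot\phi$ in the two adjacent annuli are $(+,-)$ in order of increasing $\phi$, orbits on both sides approach $\mathcal{M}_{i}$ in forward time and $\mathcal{M}_{i}$ is stable, while $(-,+)$ forces both sides to flow outward and $\mathcal{M}_{i}$ is unstable. Because the sign of $\dot\phi$ alternates between consecutive annuli, the stability type of the four limit cycles alternates around $\mathbb{T}^{2}$.

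The main obstacle I anticipate is the normal-form reduction supplied by Theorem~\ref{4-mer}: without $\beta=\gamma=0$ and $f$ homogeneous quadratic in $(x,y)$, the expression $M$ restricted to an invariant meridian would not collapse to a scalar multiple of $K'$, and the clean dichotomy of the theorem would instead have to be phrased in terms of the vanishing of the full polynomial $M$ along the meridian.
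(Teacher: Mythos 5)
Your proof is correct and follows essentially the same route as the paper's: both reduce to the normal form supplied by Theorem~\ref{4-mer}, observe that in angular coordinates the singular points on an invariant meridian are exactly the zeros of $K'$ there, and deduce both the limit-cycle property and the alternating stability from the fact that the angular velocity $-f$ vanishes only on the four meridians and changes sign across each of them. The only cosmetic differences are your use of the intrinsic $(\phi,\psi)$ parametrization in place of the paper's cylindrical coordinates, and your appeal to Poincar\'e--Bendixson where the paper uses a Rolle-type monotonicity argument to exclude other periodic orbits.
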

We find all possible invariant meridians and parallels of a cubic Kolmogorov vector field on $\mathbb{T}^2$ having finitely many invariant meridians and parallels. The following result is proved in Section \ref{sec:inv-mer-par}.
\begin{theorem}\label{kolm-mer-par}
    Suppose that a cubic Kolmogorov vector field on $\mathbb{T}^2$ has finitely many invariant meridians and invariant parallels. Then $x=0$ and $y=0$ are the only invariant meridian planes and $z=0$ is the only invariant parallel plane.
\end{theorem}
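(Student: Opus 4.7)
The plan is to invoke Theorem \ref{thm:cubic-vfld} to put the vector field in the general cubic form \eqref{eq:cubic-vfld} and then impose the Kolmogorov divisibilities $x \mid P$, $y \mid Q$, $z \mid R$ to pin down every free parameter. Reducing $P \bmod x$ in \eqref{eq:cubic-vfld} forces $\beta = 0$ and $x \mid f$; symmetrically $Q \bmod y$ gives $\gamma = 0$ and $y \mid f$, so the degree bound $\deg f \le 2$ collapses $f$ to $f = c\,xy$ with $c \in \RR$. From $z \mid R$, the piece of $R$ at $z = 0$ is $\tfrac12 K'(x,y,0)\bigl(-a^2(x^2+y^2)+a^4-1\bigr)$; since the second factor is a nonzero element of $\RR[x,y]$, we obtain $z \mid K'$, and combined with $\deg K' \le 1$ this forces $K' = \lambda z$ and $K = \lambda z^2$ for some $\lambda \in \RR$.

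The vector field then reads
\[
P = x\bigl(\tfrac{\lambda}{4}z^2 + cy^2\bigr), \quad Q = y\bigl(\tfrac{\lambda}{4}z^2 - cx^2\bigr), \quad R = \tfrac{\lambda}{2}z\bigl(z^2 - a^2(x^2+y^2) + a^4 - 1\bigr),
\]
and the rest of the argument consists of two divisibility checks. For a parallel plane $z = c_0$, invariance is equivalent to $(z - c_0)\mid R$. If $\lambda = 0$ then $R \equiv 0$ and every parallel plane is invariant, contradicting finiteness; so $\lambda \ne 0$. For $c_0 \ne 0$, substituting $z = c_0$ into $R$ leaves the nonzero polynomial $\tfrac{\lambda c_0}{2}\bigl(c_0^2 - a^2(x^2+y^2) + a^4 - 1\bigr) \in \RR[x,y]$, which is impossible; hence $c_0 = 0$ is the unique invariant parallel plane.

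For a meridian plane $ax + by = 0$ with $(a,b) \ne (0,0)$, a direct computation gives $aP + bQ = \tfrac{\lambda}{4}z^2(ax+by) + c\,xy(ay - bx)$, so invariance reduces to $(ax+by)\mid c\,xy(ay - bx)$. The case $c = 0$ would again produce infinitely many invariant meridians and is excluded; for $c \ne 0$, since $ax+by$ is an irreducible linear polynomial, unique factorisation in $\RR[x,y,z]$ forces it to divide one of $x$, $y$, or $ay - bx$. The first two alternatives yield precisely the planes $x = 0$ and $y = 0$, while the third requires $ay - bx = \mu(ax+by)$ for some $\mu \in \RR$; comparing coefficients gives $-b = \mu a$ and $a = \mu b$, hence $a(1 + \mu^2) = 0$, which over $\RR$ forces $a = b = 0$, a contradiction.

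The proof is essentially routine once Theorem \ref{thm:cubic-vfld} is in hand; the only step requiring care is the reduction to the explicit form, where one must make sure the three Kolmogorov divisibilities interact correctly with the degree constraints and the structural identity $K = K' z$. After that, the meridian and parallel statements follow at once from the short divisibility arguments described above.
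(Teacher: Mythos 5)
Your proof is correct and follows essentially the same route as the paper: reduce to the normal form of a cubic Kolmogorov vector field on $\mathbb{T}^2$ (the paper's Corollary \ref{cor:cubic-kolm}, which you re-derive inline) and then run short divisibility arguments for the meridian and parallel planes. The only cosmetic difference is that for meridians you test $(ax+by)\mid aP+bQ$ directly rather than factoring the extactic polynomial $\mathcal{E}_{\langle x,y\rangle}(\chi)=-c_1xy(x^2+y^2)$ as the paper does; both yield the same conclusion, and your explicit exclusion of the degenerate cases $c=0$ and $\lambda=0$ via the finiteness hypothesis is a point the paper leaves implicit.
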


It is proved in \cite[Theorem 1(a)]{LlMe11} that a degree $m>1$ vector field on $\mathbb{T}^2$ has a maximum of $m-2$ parallel planes. However, Benny and Sarkar \cite[Proposition 6.2]{BeSa22} pointed out that there exists a vector field of degree $m$ on $S^p\times S^1$ having exactly $m-1$ invariant parallel hyperplanes and proved that $m-1$ is the sharp bound.

Hence, a cubic vector field on $\mathbb{T}^2$ has at most two invariant parallel planes. We characterize cubic vector fields on $\mathbb{T}^2$ with exactly two invariant parallel planes. More precisely, we have the following result, which is proved in Section \ref{sec:inv-mer-par}.
\begin{theorem}\label{parallel-periodic}
    If a cubic vector field $\chi=(P,Q,R)$ on $\mathbb{T}^2$ has two invariant parallel planes, then $P,Q,$ and $R$ satisfy
    \begin{equation}\label{1.4}
        \begin{split}
             P=&\frac{1}{2}xz(px+qy)+fy-\frac{a^2p}{2} z,\\
        Q=&\frac{1}{2}yz(px+qy)-fx-\frac{a^2q}{2} z,\\
        R=&(px+qy)(z^2-1),
        \end{split}
    \end{equation}
    for some $f\in \RR[x,y,z]$ with $\deg f\leq 2$ and $p,q\in \RR$.
    
    Moreover, $z-1=0$ and $z+1=0$ are the invariant parallel planes of this vector field. Also, the parallel on  $z-1=0$ is a periodic orbit if and only if $f(x,y,1)-\frac{1}{2}(py-qx)=0$ has no solution on the circle $x^2+y^2=a^2$ and the parallel on $z+1=0$ is a periodic orbit if and only if $f(x,y,-1)+\frac{1}{2}(py-qx)=0$ has no solution on the circle $x^2+y^2=a^2$.
\end{theorem}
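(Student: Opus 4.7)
The plan is to combine Theorem \ref{thm:cubic-vfld} (which gives the canonical form of cubic vector fields on $\mathbb{T}^2$) with the hypothesis of two invariant parallel planes. A plane $z=c$ is an invariant algebraic surface for $\chi=(P,Q,R)$ exactly when $(z-c)\mid R$ in $\RR[x,y,z]$; so two distinct invariant planes $z=c_1,\,z=c_2$ force the coprime product $(z-c_1)(z-c_2)$ to divide $R$, equivalently $R(x,y,c_i)\equiv 0$ as a polynomial in $x,y$ for $i=1,2$.

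I would write $K'=a_0+a_1x+a_2y+a_3z$ (permitted since $\deg K'\leq 1$) and rearrange $R$ via $-a^2(x^2+y^2)+z^2+a^4-1=-a^2(x^2+y^2-a^2)+(z^2-1)$ to obtain
\[ R(x,y,c)=-(x^2+y^2-a^2)\!\left[\tfrac{a^2}{2}K'(x,y,c)+2(\beta x+\gamma y)\right]+\tfrac{c^2-1}{2}K'(x,y,c). \]
Decomposing $R(x,y,c)$ into its homogeneous parts of degrees $3,2,1,0$ in $(x,y)$ and killing each separately: the cubic part yields $a_1=-4\beta/a^2,\ a_2=-4\gamma/a^2$; the quadratic part yields $a_0+a_3c=0$; the linear part, after substitution, collapses to $(1-c^2)(\beta x+\gamma y)\equiv 0$; the constant part is then automatic. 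So either $c^2=1$ or $\beta=\gamma=0$. In the latter case $K'=a_3(z-c)$ and $R=\tfrac12 a_3(z-c)(-a^2(x^2+y^2)+z^2+a^4-1)$; since $a\neq 0$, the second factor is not divisible by any $z-c'$ with $c'\neq c$, so only one invariant parallel plane occurs. Thus the two invariant planes must be $z=1$ and $z=-1$, after which $a_0=a_3=0$ and $K'=2(px+qy)$ with $\beta=-a^2p/2,\ \gamma=-a^2q/2$; substitution into the formulas of Theorem \ref{thm:cubic-vfld} produces (1.4).

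For the periodic-orbit assertions, I would restrict attention to the circle $C_+:=\{z=1,\,x^2+y^2=a^2\}$. A direct computation gives $xP+yQ=\tfrac{z}{2}(px+qy)(x^2+y^2-a^2)$, which vanishes on $C_+$, and $R\equiv 0$ on the plane $z=1$; so $\chi$ is tangent to $C_+$. Expanding the tangential component and using $x^2+y^2=a^2$ yields $-yP+xQ=-a^2\bigl[f(x,y,1)-\tfrac{1}{2}(py-qx)\bigr]$ on $C_+$. The circle is therefore a single periodic orbit precisely when this tangential component is nowhere zero on $C_+$, i.e., iff $f(x,y,1)-\tfrac{1}{2}(py-qx)=0$ has no solution on $x^2+y^2=a^2$. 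The analogous computation on $C_-:=\{z=-1,\,x^2+y^2=a^2\}$ (where the sign of $z$ flips the relevant terms) produces the condition on $f(x,y,-1)+\tfrac{1}{2}(py-qx)$.

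The principal obstacle is the bookkeeping in the identity $R(x,y,c)\equiv 0$: the four homogeneous components in $(x,y)$ must be separated cleanly, and the elimination of the spurious case $\beta=\gamma=0$ relies crucially on $a\neq 0$ to prevent a second invariant parallel plane from appearing. Once the form (1.4) is secured, the periodic-orbit analysis reduces to the direct tangential calculation on each invariant circle.
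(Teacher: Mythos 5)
Your proof is correct and follows essentially the same route as the paper: both impose $(z-c_1)(z-c_2)\mid R$ on the canonical form from Theorem \ref{thm:cubic-vfld}, compare coefficients to force $\beta=-a^2p/2$, $\gamma=-a^2q/2$ and $K'=2(px+qy)$, hence $R=(px+qy)(z^2-1)$ with planes $z=\pm1$, and then detect singular points on the invariant circles via the tangential component (your $xQ-yP$ is exactly the paper's $r^2\dot\theta$ in cylindrical coordinates). The only cosmetic caveat is in your elimination of the branch $\beta=\gamma=0$: when $a_3=0$ there one gets $R\equiv 0$ and hence infinitely many invariant parallels rather than ``only one,'' but either way that branch contradicts the hypothesis of exactly two invariant parallel planes, so the argument stands.
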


In Section \ref{sec:singularity}, we prove the following.
\begin{theorem}\label{thm:quad-singular-pt}
 Suppose that $\chi=(P,Q,R)$ is a quadratic vector field on $\mathbb{T}^2$ such that $R$ is not identically zero. Then $\chi$ does not have any singular point on $\mathbb{T}^2$.
\end{theorem}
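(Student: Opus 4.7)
The plan is to specialize the cubic classification in Theorem \ref{thm:cubic-vfld} to the quadratic setting and then derive a contradiction from the existence of a hypothetical singular point on $\mathbb{T}^2$.

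First, I would take $\chi=(P,Q,R)$ to be an arbitrary quadratic vector field on $\mathbb{T}^2$ and apply Theorem \ref{thm:cubic-vfld}. Imposing $\deg R\leq 2$ forces the cubic contribution $(\beta x+\gamma y)(x^2+y^2-a^2)$ in $R$ to vanish, so $\beta=\gamma=0$, and it forces $K'$ to be a constant (call it $c$), since $K'$ multiplies the degree-$2$ factor $-a^2(x^2+y^2)+z^2+a^4-1$. The constraints $\deg P,\deg Q\leq 2$ then reduce to $\deg f\leq 1$. This pins down the quadratic vector fields on $\mathbb{T}^2$ as
$$P=\tfrac{c}{4}xz+fy,\qquad Q=\tfrac{c}{4}yz-fx,\qquad R=\tfrac{c}{2}\bigl(-a^2(x^2+y^2)+z^2+a^4-1\bigr),$$
with $\deg f\leq 1$. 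The hypothesis $R\not\equiv 0$ translates to $c\neq 0$.

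Next, I would suppose for contradiction that $(x_0,y_0,z_0)\in\mathbb{T}^2$ is a singular point. Writing $u:=x_0^2+y_0^2$, the equation $R(x_0,y_0,z_0)=0$ gives $z_0^2=1+a^2(u-a^2)$, while membership in $\mathbb{T}^2$ gives $z_0^2=1-(u-a^2)^2$. Equating and simplifying yields $u(u-a^2)=0$. The case $u=0$ forces $z_0^2=1-a^4<0$, which is impossible since $a>1$, so necessarily $u=a^2$ and $z_0=\pm 1$.

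Finally, the key observation is that the combination $x_0P(x_0,y_0,z_0)+y_0Q(x_0,y_0,z_0)$ cancels the $f$-contribution identically, leaving
$$x_0P(x_0,y_0,z_0)+y_0Q(x_0,y_0,z_0)=\tfrac{c}{4}(x_0^2+y_0^2)z_0=\tfrac{ca^2z_0}{4}.$$
Since $c\neq 0$, $a>1$, and $z_0=\pm 1$, this scalar is nonzero, contradicting $P(x_0,y_0,z_0)=Q(x_0,y_0,z_0)=0$. The only non-routine step I anticipate is spotting that the right linear combination to form is $x_0P+y_0Q$, which eliminates $f$ and collapses the singular-point conditions to a single impossible scalar identity; after that, everything is direct substitution.
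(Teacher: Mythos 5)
Your proposal is correct and follows essentially the same route as the paper: reduce to the quadratic normal form (the paper's Corollary \ref{quad-classification}), use $R=0$ together with the torus equation to force $x_0^2+y_0^2=a^2$ and $z_0=\pm 1$, and then derive a contradiction from a linear combination of $P$ and $Q$ at the singular point. The only (cosmetic) difference is in the last step: the paper uses $y_0P-x_0Q$ to conclude $f(x_0,y_0,z_0)=0$ and then $x_0=y_0=0$, whereas you use $x_0P+y_0Q=\tfrac{c}{4}a^2z_0\neq 0$ to reach the contradiction in one stroke.
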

We denote $s(\chi)$ as the set of all singular points of the vector field $\chi$.
\begin{proposition}\label{prop:sing-pt}
Suppose that $\chi=(Ay,-Ax,0)$ is a vector field on $\mathbb{T}^2$ for some $A\in \RR[x,y,z]$. Then, $s(\chi)\cap \mathbb{T}^2=\{A=0\}\cap \mathbb{T}^2$. 
\end{proposition}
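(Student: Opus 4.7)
The plan is to unfold the definition of a singular point and exploit the geometric constraint that $a>1$ forces the axis of the torus to miss $\mathbb{T}^2$. A point $(x,y,z)$ is in $s(\chi)$ precisely when $Ay=0$, $-Ax=0$ and $0=0$, i.e.\ when
\begin{equation*}
    Ax=0 \quad \text{and} \quad Ay=0.
\end{equation*}
This system is satisfied either when $A(x,y,z)=0$, or when $x=y=0$.

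The key step is then to show that the second alternative contributes nothing on $\mathbb{T}^2$. Substituting $x=y=0$ into the defining equation of $\mathbb{T}^2$ gives $(-a^2)^2+z^2-1=a^4+z^2-1=0$, i.e.\ $z^2=1-a^4$. Since $a\in(1,\infty)$, we have $a^4>1$, so $1-a^4<0$ and no real $z$ satisfies this equation. Hence the line $\{x=y=0\}$ does not meet $\mathbb{T}^2$.

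Combining these two observations yields $s(\chi)\cap \mathbb{T}^2=\{A=0\}\cap \mathbb{T}^2$, which is the desired equality. There is no serious obstacle here; the proof is essentially a one-line check once one observes that the rotational axis of the torus lies outside $\mathbb{T}^2$ whenever $a>1$.
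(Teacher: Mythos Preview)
Your argument is correct and is essentially the same as the paper's proof: both reduce the singular set to $\{A=0\}$ on $\mathbb{T}^2$ by noting that the $z$-axis $\{x=y=0\}$ does not intersect the torus when $a>1$. You simply spell out the verification $z^2=1-a^4<0$ that the paper leaves implicit in the phrase ``$x_0$ and $y_0$ cannot be simultaneously zero.''
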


\section{Preliminaries}\label{sec:prel}

In this section, we present some basic definitions and useful results. We recall the notion of the Lie bracket of two vector fields.

For polynomial vector fields on a regular algebraic hypersurface, \cite[Theorem 3]{llibre2012rational} states that a sufficient number of invariant algebraic hypersurfaces guarantees the existence of a rational first integral. So, it is essential to study different invariant algebraic hypersurfaces for a given vector field. In this paper, we study the invariant planes in $\RR^3$ containing the $z$-axis or orthogonal to the $z$-axis.

We recall the following definitions from \cite{LlMe11}.
The curves obtained by the intersection of a plane containing the $z$-axis with $\mathbb{T}^2$ are called meridians. Similarly, the curves obtained by the intersection of a plane orthogonal to the $z$-axis with $\mathbb{T}^2$ are called parallels. More precisely, meridians can be obtained by $\{ax+by=0\}\cap \mathbb{T}^2$ where $a,b\in \RR$. So, meridians always appear in pairs. The plane $ax+by=0$ is said to be a \textit{meridian plane}. Also, parallels can be obtained by $\{z-k=0\}\cap \mathbb{T}^2$ with $-1\leq k\leq 1$. So, if $k\neq \pm 1$ then parallels appear in pairs. The plane $z-k=0$ is said to be a \textit{parallel plane}.

We say that a meridian obtained by the plane $ax+by=0$ is invariant if $\{ax+by=0\}$ is invariant for $\chi$. Similarly, we say that a parallel obtained by the plane $z-k=0$ is invariant if $\{z-k=0\}$ is invariant for $\chi$. 

We use the tool extactic polynomial to find invariant meridians and parallels. Let
$W$ be a vector subspace of $\RR[x_1,...,x_n]$ generated by the independent
polynomials $v_1 ,... , v_{\ell}$, i.e., $W=\langle v_1,...,v_{\ell} \rangle$. The extactic polynomial of $\chi$ associated with $W$ is the polynomial
$$\mathcal{E}_W(\chi)= 
\begin{vmatrix}
v_1 & \dots &v_{\ell}\\
\chi (v_1) &\dots &\chi (v_{\ell})\\
\cdot & &\cdot \\
\cdot & \cdots&\cdot\\
\cdot & &\cdot \\
\chi^{{\ell}-1}(v_1) &\dots &\chi^{{\ell}-1} (v_{\ell})
\end{vmatrix},
$$
where $\chi^j(v_i) = \underbrace{\chi \circ \chi \circ \cdots \circ \chi}_{j \text{ times}}(v_i)$ for $i\in \{1,\ldots\!, \ell\}$ and $j\in \{2, \ldots\!, {\ell-1}\}$. From the properties of the determinant, it follows that the definition of the extactic polynomial is independent of the chosen
basis of $W$.
The proof of the following is similar to the proof
of \cite[Proposition 1]{LlMe07}.
\begin{proposition}\label{extactic-polynomial}
Let $\chi$ be a polynomial vector field in $\mathbb{R}^n$ and $W$ be 
a finite-dimensional vector subspace of $\RR[x_1, x_2,\ldots\!, x_n]$ with $\dim(W) > 1$. If $\{f=0\}$ is an invariant algebraic hypersurface for the vector field $\chi$ with $f\in W$, then $f$ is a factor of $\mathcal{E}_W(\chi)$.
\end{proposition}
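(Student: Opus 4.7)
The plan is to exploit the defining property of invariance, $\chi(f)=Kf$, and show inductively that every higher iterate $\chi^{j}(f)$ is a polynomial multiple of $f$; then choosing a basis of $W$ whose first element is $f$ will put a column of multiples of $f$ into the extactic determinant.

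First I would prove, by induction on $j\ge 0$, that there exist polynomials $K_{j}\in\RR[x_{1},\ldots,x_{n}]$ with
\begin{equation*}
\chi^{j}(f)=K_{j}\,f.
\end{equation*}
The base case $j=0$ is trivial with $K_{0}=1$, and $j=1$ is the invariance hypothesis with $K_{1}=K$. For the inductive step, using that $\chi$ is a derivation on $\RR[x_{1},\ldots,x_{n}]$,
\begin{equation*}
\chi^{j+1}(f)=\chi\!\left(K_{j}f\right)=\chi(K_{j})\,f+K_{j}\,\chi(f)=\bigl(\chi(K_{j})+K_{j}K\bigr)f,
\end{equation*}
so $K_{j+1}=\chi(K_{j})+K_{j}K$ works.

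Next, since $f\in W$ and $f\neq 0$, I would extend $\{f\}$ to a basis $v_{1}=f,v_{2},\ldots,v_{\ell}$ of $W$. Using this basis to compute the extactic determinant, the first column becomes $(f,\chi(f),\ldots,\chi^{\ell-1}(f))^{T}=(K_{0}f,K_{1}f,\ldots,K_{\ell-1}f)^{T}$, so $f$ factors out of the first column and divides the resulting determinant in $\RR[x_{1},\ldots,x_{n}]$. Hence $f\mid\mathcal{E}_{W}(\chi)$ when the extactic polynomial is expressed in this particular basis.

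Finally I would invoke the basis-independence remark preceding the statement: changing the basis of $W$ by a matrix $M\in\mathrm{GL}_{\ell}(\RR)$ multiplies every row of the extactic matrix by $M$, hence multiplies the determinant by $\det(M)^{\ell}$, a nonzero real constant. Therefore divisibility by $f$ in one basis implies divisibility by $f$ in any basis, which completes the argument. The main (and only) nontrivial step is the inductive identity $\chi^{j}(f)=K_{j}f$; once that is in hand, everything else is determinant bookkeeping, so I do not expect any serious obstacle.
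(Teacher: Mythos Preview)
Your argument is correct and is precisely the standard proof the paper defers to (it does not give its own proof, merely citing \cite[Proposition~1]{LlMe07}). One cosmetic slip: a change of basis by $M\in\mathrm{GL}_{\ell}(\RR)$ right-multiplies the entire extactic matrix by $M^{T}$, so the determinant is scaled by $\det M$, not $\det(M)^{\ell}$; this does not affect your conclusion.
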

\par The multiplicity of an invariant algebraic hypersurface $f = 0$ with $f\in W$ is the largest positive integer
$k$ such that $f^k$ divides the extactic polynomial $\mathcal{E}_W(\chi)$ when $\mathcal{E}_W(\chi)\neq 0$, otherwise the multiplicity
is infinite. For more details regarding the multiplicity, see \cite{ChLlPe07, LlZh09}. In what follows, for the number of invariant algebraic hypersurfaces, we take into account their multiplicities.
\begin{definition}
Let $U$ be an open subset of $\RR^3$. A non-constant analytic map $H \colon U \to \mathbb{R}$ is called a first integral
of the vector field \eqref{vector-field} on $U$ if $H$ is constant on all solution curves of the system \eqref{system} contained in $U$; i.e., $H(x(t),y(t), z(t)) =$ constant for all values of $t$ for which the solution $(x(t),y(t), z(t))$ is defined and contained in $U$.
\end{definition}
Note that $H$ is
a first integral of the vector field \eqref{vector-field} on $U$ if and only if $\chi H=0$ on $U$. A vector field in $\RR^n$ is called completely integrable if it has $n-1$ functionally independent first integrals. We recall the following two results.
\begin{lemma}\cite[Lemma 3.1]{JaSa23}\label{sum-2-poly-zero}
    Let $Q_1,Q_2,R_1,R_2\in \RR[x_1,...,x_n]$ with $R_1,R_2\neq 0$ be such that  ${Q_1R_1+Q_2R_2}$ is zero and $\gcd(R_1,R_2)=1$. Then $Q_1=AR_2,~Q_2=-AR_1$ for some ${A \in \RR[x_1,...,x_n].}$
\end{lemma}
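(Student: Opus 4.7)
The plan is to exploit the fact that $\RR[x_1,\ldots,x_n]$ is a unique factorization domain, and in particular that Euclid's lemma holds: if an element divides a product and is coprime with one factor, then it divides the other factor.

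First I would rewrite the hypothesis $Q_1 R_1 + Q_2 R_2 = 0$ as
\begin{equation*}
Q_1 R_1 = -Q_2 R_2,
\end{equation*}
which immediately says that $R_2$ divides the product $Q_1 R_1$ inside the UFD $\RR[x_1,\ldots,x_n]$. Combined with the hypothesis $\gcd(R_1, R_2) = 1$, Euclid's lemma in a UFD yields $R_2 \mid Q_1$. Hence there exists $A \in \RR[x_1,\ldots,x_n]$ with $Q_1 = A R_2$.

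Next I would substitute this expression back into the relation $Q_1 R_1 + Q_2 R_2 = 0$ to obtain
\begin{equation*}
A R_1 R_2 + Q_2 R_2 = R_2 (A R_1 + Q_2) = 0.
\end{equation*}
Since $\RR[x_1,\ldots,x_n]$ is an integral domain and $R_2 \neq 0$ by hypothesis, the second factor must vanish, giving $Q_2 = -A R_1$. This $A$ is the same polynomial produced in the previous step, so both equalities $Q_1 = A R_2$ and $Q_2 = -A R_1$ hold simultaneously, completing the proof.

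I do not expect any genuine obstacle here; the only point to be careful about is justifying the two ring-theoretic facts used, namely that $\RR[x_1,\ldots,x_n]$ is a UFD (by iterating the Gauss lemma starting from the field $\RR$) and that in a UFD, coprimality together with $R_2 \mid Q_1 R_1$ forces $R_2 \mid Q_1$. Both are standard, and once they are invoked the argument is a two-line division-and-cancellation.
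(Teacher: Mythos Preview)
Your proof is correct. The paper does not give its own proof of this lemma; it simply quotes the result from \cite[Lemma 3.1]{JaSa23}. The argument you wrote is exactly the standard one: use that $\RR[x_1,\ldots,x_n]$ is a UFD so Euclid's lemma applies, deduce $R_2\mid Q_1$, write $Q_1=AR_2$, and cancel $R_2$ in the integral domain to get $Q_2=-AR_1$. There is nothing to add.
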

\begin{lemma}\cite[Lemma 4.1]{JaSa23}\label{sum-3-poly-zero}
    Let $Q_1,Q_2,Q_3\in \RR[x,y,z]$ be such that $Q_1x+Q_2y+Q_3z$ is zero. Then $Q_1=Ay+Bz,~Q_2=-Ax + Cz$, and $Q_3=-Bx-Cy$ for some $A, B, C\in \RR[x,y,z]$.
\end{lemma}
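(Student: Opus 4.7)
The plan is to reduce this three-term syzygy for $(x,y,z)$ to the two-term syzygy Lemma \ref{sum-2-poly-zero} by viewing $\RR[x,y,z]$ as $\RR[y,z][x]$ and peeling off the $x$-free part of $Q_2$ and $Q_3$. Structurally this is the first Koszul syzygy for the regular sequence $(x,y,z)$, but once set up correctly it requires only the already-stated two-polynomial result and the fact that $\RR[x,y,z]$ is an integral domain.

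First I would split $Q_2(x,y,z)=q_2(y,z)+x\,\widetilde Q_2(x,y,z)$ and $Q_3(x,y,z)=q_3(y,z)+x\,\widetilde Q_3(x,y,z)$, where $q_2(y,z):=Q_2(0,y,z)$ and $q_3(y,z):=Q_3(0,y,z)$. Evaluating the hypothesis $Q_1x+Q_2y+Q_3z=0$ at $x=0$ gives
\begin{equation*}
    q_2(y,z)\,y+q_3(y,z)\,z=0 \quad \text{in } \RR[y,z].
\end{equation*}
Since $\gcd(y,z)=1$ in $\RR[y,z]$, Lemma \ref{sum-2-poly-zero} produces a polynomial $C\in \RR[y,z]$ with $q_2=Cz$ and $q_3=-Cy$.

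Next I would substitute these expressions back into the original identity. The $Cyz$ terms cancel, and what remains factors as
\begin{equation*}
    x\bigl(Q_1+\widetilde Q_2\,y+\widetilde Q_3\,z\bigr)=0.
\end{equation*}
Since $\RR[x,y,z]$ is an integral domain and $x\neq 0$, this forces $Q_1=-\widetilde Q_2 y-\widetilde Q_3 z$. Setting $A:=-\widetilde Q_2$ and $B:=-\widetilde Q_3$, and retaining the $C$ obtained above, a direct verification gives $Q_1=Ay+Bz$, $Q_2=Cz+x\widetilde Q_2=-Ax+Cz$, and $Q_3=-Cy+x\widetilde Q_3=-Bx-Cy$, which are the desired formulas.

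The step I expect to be the main conceptual hurdle is deciding on the right reduction: recognizing that restricting to the hyperplane $\{x=0\}$ isolates a clean two-term identity to which Lemma \ref{sum-2-poly-zero} applies, and that after re-inserting the result into the hypothesis the common factor of $x$ can legitimately be cancelled. The rest is bookkeeping. A minor point worth stating explicitly in the written-up proof is that $\gcd(y,z)=1$ inside $\RR[y,z]$ (so the coprimality hypothesis of Lemma \ref{sum-2-poly-zero} is satisfied), since the two-term lemma is applied in the ring $\RR[y,z]$ rather than in $\RR[x,y,z]$.
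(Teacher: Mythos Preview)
Your argument is correct. Splitting off the $x$-free parts of $Q_2$ and $Q_3$, applying Lemma~\ref{sum-2-poly-zero} in $\RR[y,z]$ to the relation $q_2y+q_3z=0$, and then cancelling the common factor $x$ from the remaining identity is a clean and complete proof; the bookkeeping with $A=-\widetilde Q_2$, $B=-\widetilde Q_3$, and $C$ as produced by the two-term lemma checks out exactly as you wrote.

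There is nothing to compare against here: the paper does not prove this lemma but merely quotes it from \cite{JaSa23}. Your reduction to Lemma~\ref{sum-2-poly-zero} via restriction to $\{x=0\}$ is a standard and efficient way to establish the first Koszul syzygy for $(x,y,z)$, and it is entirely self-contained given the tools already available in the paper.
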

Suppose that $X$ and $Y$ are two vector fields on a smooth manifold $M$. Then, we know that the Lie bracket of $X$ and $Y$ defined by $[X,Y]:=XY-YX$ is also a vector field on $M$. 
More precisely, if  $X=\sum\limits_{i=1}^n P_i \frac{\partial}{\partial x_i}$ and $Y=\sum\limits_{i=1}^n Q_i \frac{\partial}{\partial x_i}$ are two  vector fields, then the Lie bracket of $X$ and $Y$ is defined as $[X,Y]:=\sum\limits_{i=1}^n (X(Q_i)-Y(P_i)) \frac{\partial}{\partial x_i}$. In this paper, we discuss the Lie bracket of polynomial vector fields on $\mathbb{T}^2$ of degree up to two.

Computing the Lie bracket of two vector fields is quite tedious. One can use the following SageMath\footnote{Official website of the open-source software SageMath: \href{https://www.sagemath.org/}{https://www.sagemath.org/}} code to compute the Lie bracket of two vector fields in $\RR^3$. For example, we take two vector fields $P=(P1,P2,P3)$ and $Q=(Q1,Q2,Q3)$ to find their Lie bracket.
\begin{lstlisting}[style=mystyle]
from sage.manifolds.operators import *
var(`') #Define variables
E.<x,y,z> = EuclideanSpace()
def LieBracket(P,Q):
    PQ1 = P.dot(grad(E.scalar_field(Q[1])))
    PQ2 = P.dot(grad(E.scalar_field(Q[2])))
    PQ3 = P.dot(grad(E.scalar_field(Q[3])))
    QP1 = Q.dot(grad(E.scalar_field(P[1])))
    QP2 = Q.dot(grad(E.scalar_field(P[2])))
    QP3 = Q.dot(grad(E.scalar_field(P[3])))
    return E.vector_field(PQ1-QP1,PQ2-QP2,PQ3-QP3)
P=E.vector_field(P1,P2,P3)
Q=E.vector_field(Q1,Q2,Q3)
LieBracket(P,Q).display()
\end{lstlisting}
Throughout this paper, $f^{(j)}$ denotes the degree $j$ homogeneous part of the polynomial $f$.

\section{Polynomial vector fields on $\mathbb{T}^2$}\label{sec:pol-vfld}
In this section, we characterize polynomial vector fields of degrees up to three on $\mathbb{T}^2$. We also classify all cubic Kolmogorov, pseudo-type-$n$ vector fields, the Lie bracket of two quadratic vector fields on $\mathbb{T}^2$ and discuss their Darboux integrability.

\begin{proof}[\textbf{Proof of \Cref{thm:cubic-vfld}}]
    Suppose that $\chi$ is a vector field on $\mathbb{T}^2$. So, $\chi$ satisfies
\begin{equation}\label{eq:cubic-vfld-1}
    4(x^2+y^2-a^2)(Px+Qy)+2Rz=K((x^2+y^2-a^2)^2+z^2-1)
\end{equation}
for some polynomial $K\in \RR[x,y,z]$ with $\deg K\leq 2$. Observe that the constant term of $K$ is zero because the left side of \eqref{eq:cubic-vfld-1} has no constant term.

Thus, $K^{(0)}=0$. Now, comparing the degree 6 terms in \eqref{eq:cubic-vfld-1}, we get
$$4(x^2+y^2)(P^{(3)}x+Q^{(3)}y)=K^{(2)}(x^2+y^2)^2.$$ Hence,
\begin{equation}\label{eq:cubic-vfld-2}
    P^{(3)}x+Q^{(3)}y=\frac{1}{4}K^{(2)}(x^2+y^2).
\end{equation}
We rewrite the above equation as $(P^{(3)}-\frac{1}{4}K^{(2)}x)x+(Q^{(3)}-\frac{1}{4}K^{(2)}y)y=0$. Then, by Lemma \ref{sum-2-poly-zero}, $$P^{(3)}=\frac{1}{4}K^{(2)}x+Ay~\mbox{and}~ Q^{(3)}=\frac{1}{4}K^{(2)}y-Ax$$ for some $A\in \RR[x,y,z]$.

Again, comparing the degree 5 terms in \eqref{eq:cubic-vfld-1}, we get
\begin{equation}\label{eq:cubic-vfld-3}
    4(x^2+y^2)(P^{(2)}x+Q^{(2)}y)=K^{(1)}(x^2+y^2)^2.
\end{equation} Hence, $(P^{(2)}-\frac{1}{4}K^{(1)}x)x+(Q^{(2)}-\frac{1}{4}K^{(1)}y)y=0.$ Then, by Lemma \ref{sum-2-poly-zero}, $$P^{(2)}=\frac{1}{4}K^{(1)}x+By~\mbox{and}~ Q^{(2)}=\frac{1}{4}K^{(1)}y-Bx$$ for some $B\in \RR[x,y,z]$.

The equality in \eqref{eq:cubic-vfld-1} is true for any $x,y,z\in \RR$. In particular, for $z=0$,  \eqref{eq:cubic-vfld-1} becomes
\begin{equation}\label{eq:cubic-vfld-4}
 4(x^2+y^2-a^2)(P(x,y,0)x+Q(x,y,0)y)=K(x,y,0)((x^2+y^2-a^2)^2-1).
\end{equation}
Comparing the degree 4 terms in \eqref{eq:cubic-vfld-4}, we get
\begin{dmath*}
    4(x^2+y^2)(P^{(1)}(x,y,0)x+Q^{(1)}(x,y,0)y)-4a^2(P^{(3)}(x,y,0)x+Q^{(3)}(x,y,0)y)=-2a^2K^{(2)}(x,y,0)(x^2+y^2).
\end{dmath*}
Now, using \eqref{eq:cubic-vfld-2} in the above equation, we obtain
\begin{equation*}
    P^{(1)}(x,y,0)x+Q^{(1)}(x,y,0)y=-\frac{a^2}{4}K^{(2)}(x,y,0).
\end{equation*}
Next, by comparing the degree 2 terms in \eqref{eq:cubic-vfld-4}, we get
\begin{equation*}
    -4a^2(P^{(1)}(x,y,0)x+Q^{(1)}(x,y,0)y)=(a^4-1)K^{(2)}(x,y,0).
\end{equation*}
Hence, from the last two equations, we have $P^{(1)}(x,y,0)x+Q^{(1)}(x,y,0)y=0$ as well as $K^{(2)}(x,y,0)=0$. So, $K^{(2)}=L_1z$ for some $L_1\in \RR[x,y,z]$. Now, comparing the degree 3 terms in \eqref{eq:cubic-vfld-4}, we get
\begin{equation*}
    4(x^2+y^2)(P^{(0)}x+Q^{(0)}y)-4a^2(P^{(2)}(x,y,0)x+Q^{(2)}(x,y,0)y)=-2a^2K^{(1)}(x,y,0)(x^2+y^2).
\end{equation*}
Hence, using \eqref{eq:cubic-vfld-3} in the above equation, we have
\begin{equation*}
    P^{(0)}x+Q^{(0)}y=-\frac{a^2}{4}K^{(1)}(x,y,0).
\end{equation*}
Again, comparing the degree 1 terms in \eqref{eq:cubic-vfld-4}, we have
\begin{equation*}
    -4a^2(P^{(0)}x+Q^{(0)}y)=(a^4-1)K^{(1)}(x,y,0).
\end{equation*}
Hence, from the last two equations, we obtain $K^{(1)}(x,y,0)=0$ as well as $P^{(0)}x+Q^{(0)}y=0$. So, $P^{(0)}=Q^{(0)}=0$ and $K^{(1)}=L_0z$ for some $L_0\in \RR$. Hence, the cofactor $K=K'z$, where $K':=L_1+L_0$.

Now, comparing the degree 1 terms in \eqref{eq:cubic-vfld-1}, we get that $2R^{(0)}z=(a^4-1)L_0z$. This gives $R^{(0)}=\frac{a^4-1}{2}L_0$. Now, comparing the degree 2 terms in \eqref{eq:cubic-vfld-1}, we get
$$-4a^2(P^{(1)}x+Q^{(1)}y)+2R^{(1)}z=(a^4-1)L_1z.$$
Hence, $P^{(1)}x+Q^{(1)}y+(-\frac{1}{2a^2}R^{(1)}+\frac{a^4-1}{4a^2}L_1)z=0.$
Then, by Lemma \ref{sum-3-poly-zero}, $$P^{(1)}=\alpha y+\beta z,~Q^{(1)}=-\alpha x+\gamma z~\mbox{and}~ R^{(1)}=2a^2(\frac{a^4-1}{4a^2}L_1+\beta x+\gamma y)$$ for some $\alpha, \beta, \gamma \in \RR$.
Now, comparing the degree 3 terms in \eqref{eq:cubic-vfld-1}, we get
\begin{equation*}
    -4a^2(P^{(2)}x+Q^{(2)}y)+2R^{(2)}z=L_0z(-2a^2(x^2+y^2)+z^2).
\end{equation*}
Then, using \eqref{eq:cubic-vfld-3} in the above equation, we get
\begin{equation*}
    R^{(2)}=\frac{1}{2}L_0(-a^2(x^2+y^2)+z^2).
\end{equation*}
Next, comparing the degree 4 terms in \eqref{eq:cubic-vfld-1}, we get
\begin{equation}\label{eq:for-r3}
    4(x^2+y^2)(P^{(1)}x+Q^{(1)}y)-4a^2(P^{(3)}x+Q^{(3)}y)+2R^{(3)}z=L_1z(-2a^2(x^2+y^2)+z^2).
\end{equation}
Substituting the values of $P^{(1)}$ and $Q^{(1)}$ in \eqref{eq:for-r3} along with \eqref{eq:cubic-vfld-2}, we obtain
\begin{equation*}
    R^{(3)}=\frac{1}{2}L_1(-a^2(x^2+y^2)+z^2)-2(x^2+y^2)(\beta x+\gamma y).
\end{equation*}
Let us now construct $P,Q,R$.
\begin{equation*}
    \begin{split}
  P&=P^{(3)}+P^{(2)}+P^{(1)}+P^{(0)}\\
  &=\frac{1}{4}K^{(2)}x+Ay+\frac{1}{4}K^{(1)}x+By+\alpha y+\beta z\\
  &=\frac{1}{4}Kx+(A+B+\alpha)y+\beta z.
    \end{split}
\end{equation*}
Assuming $f:=A+B+\alpha$, we get $P=\frac{1}{4}Kx+fy+\beta z$. Also,
\begin{equation*}
    \begin{split}
    Q&=\frac{1}{4}K^{(2)}y-Ax+\frac{1}{4}K^{(1)}y-Bx-\alpha x+\gamma z\\
    &=\frac{1}{4}Ky-fx+\gamma z,\\
    R&=\frac{1}{2}K'(-a^2(x^2+y^2)+z^2+a^4-1)-2(\beta x+\gamma y)(x^2+y^2-a^2).
    \end{split}
\end{equation*}
So, if $\chi=(P,Q,R)$ is a cubic vector field on $\mathbb{T}^2$, then we get $P,Q,$ and $R$ as in \eqref{eq:cubic-vfld}.

Conversely, if $P,Q,$ and $R$ are given by \eqref{eq:cubic-vfld}, then they satisfy \eqref{eq:cubic-vfld-1}. Hence, the converse part also follows.
\end{proof}
\begin{corollary}\label{cor:cubic-kolm}
   Let $\chi=(P,Q,R)$ be a cubic Kolmogorov vector field in $\RR^3$. Then $\chi$ is a vector field on $\mathbb{T}^2$ if and only if there exist $c_1,c_2\in \RR$ such that 
\begin{equation}\label{eq:cubic-kolm}
    \begin{split}
        P=&\frac{1}{4}Kx+c_1xy^2,\\
        Q=&\frac{1}{4}Ky-c_1x^2y,\\
        R=&\frac{c_2}{2}z(-a^2(x^2+y^2)+z^2+a^4-1),
    \end{split}
\end{equation}
where $K=c_2z^2$ is the cofactor of $\chi$ for $\mathbb{T}^2$.
\end{corollary}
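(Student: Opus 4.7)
The plan is to apply \Cref{thm:cubic-vfld} to write $P,Q,R$ in the normal form \eqref{eq:cubic-vfld}, and then translate the Kolmogorov conditions $x\mid P$, $y\mid Q$, $z\mid R$ into algebraic constraints on the parameters $f,K',\beta,\gamma$. Since the converse in \eqref{eq:cubic-kolm} is immediate (one just checks directly that the resulting $P,Q,R$ satisfy \eqref{eq:cubic-vfld-1} with cofactor $c_2z^2$, or simply invokes \Cref{thm:cubic-vfld} in the reverse direction), the real content is the forward implication, which I would organize as three short steps.

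First I would exploit $x\mid P$. Since $P=\tfrac14 Kx+fy+\beta z$, divisibility by $x$ forces $fy+\beta z\equiv 0 \pmod{x}$. Substituting $x=0$ gives $f(0,y,z)\,y+\beta z=0$ as a polynomial identity in $y,z$; setting $y=0$ yields $\beta=0$, and then $f(0,y,z)=0$, so $x\mid f$. Writing $f=xg$ with $\deg g\leq 1$ is the outcome of this step.

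Next I would repeat the argument for $y\mid Q$. Using $Q=\tfrac14 Ky-fx+\gamma z=\tfrac14 Ky-x^2g+\gamma z$, divisibility by $y$ together with setting $y=0$ gives $-x^2g(x,0,z)+\gamma z=0$; the coefficient of $z^0$ forces $g(x,0,0)=0$ while the coefficient at $x=0$ gives $\gamma=0$, hence $y\mid g$. Since $\deg g\leq 1$, this means $g=c_1 y$ for some $c_1\in\RR$, so $f=c_1 xy$ and the formulas for $P$ and $Q$ in \eqref{eq:cubic-kolm} follow.

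Finally I would use $z\mid R$. With $\beta=\gamma=0$, \Cref{thm:cubic-vfld} reduces $R$ to $\tfrac12 K'\bigl(-a^2(x^2+y^2)+z^2+a^4-1\bigr)$. Setting $z=0$ shows that $K'(x,y,0)\bigl(-a^2(x^2+y^2)+a^4-1\bigr)$ vanishes identically in $x,y$; the second factor is a nonzero polynomial (as $a>1$), so $K'(x,y,0)=0$ and $z\mid K'$. Because $\deg K'\leq 1$, this forces $K'=c_2 z$ for some $c_2\in\RR$, giving $K=c_2z^2$ and the stated form of $R$. The only potential obstacle is keeping track of the degree bounds on $f$ and $K'$ inherited from \Cref{thm:cubic-vfld}; once these are recorded, each divisibility condition collapses to a one-line polynomial identity, and no calculation is harder than plugging in a coordinate.
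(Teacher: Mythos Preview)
Your proof is correct and follows essentially the same approach as the paper: invoke \Cref{thm:cubic-vfld} and then translate the Kolmogorov divisibility conditions $x\mid P$, $y\mid Q$, $z\mid R$ into constraints on $f,\beta,\gamma,K'$. The only cosmetic difference is that the paper handles the first two conditions simultaneously via the identity $(fy+\beta z)x+(-fx+\gamma z)y=z(\beta x+\gamma y)$ to obtain $\beta=\gamma=0$ and then $f=c_1xy$ in one stroke, whereas you reach the same conclusions sequentially.
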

\begin{proof}
    By Theorem \ref{thm:cubic-vfld}, $\chi$ is a vector field on $\mathbb{T}^2$ if and only if $P,Q$, and $R$ are given by \eqref{eq:cubic-vfld}. Since $\chi$ is a Kolmogorov vector field, $x,y,$ and $z$ must divide $P,Q,$ and $R$, respectively. If $x$ divides $P$ then $x$ divides $fy+\beta z$. Also, $y$ divides $Q$ implies $y$ divides $-fx+\gamma z$. Suppose that $fy+\beta z=K_1x$ and $-fx+\gamma z=K_2y$ for some $K_1,K_2\in \RR[x,y,z]$. Now, $K_1x^2+K_2y^2=(fy+\beta z)x+(-fx+\gamma z)y=z(\beta x+\gamma y)$. Hence, $\beta=\gamma=0$ and this gives $K_1x^2+K_2y^2=0$. So, $K_1=c_1 y^2$ and $K_2=-c_1 x^2$ for some $c_1 \in \RR$. So,
    $$P=\frac{1}{4}Kx+c_1xy^2~\text{and}~Q=\frac{1}{4}Ky-c_1x^2y.$$ 
    Also, $R=\frac{1}{2}K'(-a^2(x^2+y^2)+z^2+a^4-1)$ since $\beta=\gamma=0$. As $z$ divides $R$, $z$ divides $K'$, say $K'=c_2z$ for some $c_2\in \RR$. So, $R=\frac{c_2}{2}z(-a^2(x^2+y^2)+z^2+a^4-1)$. Hence, we obtain the result.
\end{proof}
\begin{corollary}\label{quad-classification}
If $\chi=(P,Q,R)$ is a quadratic vector field on $\mathbb{T}^2$, then $P,Q,$ and $R$ satisfy
\begin{equation}\label{eq:quad-form}
    \begin{split}
        P=&\frac{\alpha}{4} xz+fy,\\
        Q=&\frac{\alpha}{4} yz-fx,\\
        R=&\frac{\alpha}{2}(-a^2(x^2+y^2)+z^2+a^4-1),
    \end{split}
\end{equation}
where $\alpha\in \RR$ and $f$ is a linear polynomial.    
\end{corollary}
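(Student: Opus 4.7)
The plan is to specialize \Cref{thm:cubic-vfld} to degree two by writing down its conclusion and then imposing $\deg P, \deg Q, \deg R \le 2$. Since a quadratic vector field on $\mathbb{T}^2$ is \emph{a fortiori} a cubic vector field on $\mathbb{T}^2$, the formulas \eqref{eq:cubic-vfld} apply and provide data $f, K' \in \RR[x,y,z]$ and $\beta, \gamma \in \RR$ with $K = K'z$ and $\deg f, \deg K \le 2$. The whole task is then to kill the degree-three contributions that these formulas can produce.

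First I would extract the degree-three homogeneous parts of $P$ and $Q$ from \eqref{eq:cubic-vfld}, namely $P^{(3)} = \tfrac14 K^{(2)} x + f^{(2)} y$ and $Q^{(3)} = \tfrac14 K^{(2)} y - f^{(2)} x$. Multiplying the first equation by $x$ and the second by $y$ and adding symmetrizes away $f^{(2)}$, giving $\tfrac14 K^{(2)}(x^2+y^2) = 0$ and hence $K^{(2)} = 0$, after which $f^{(2)} = 0$ as well. Because $K = K'z$, the identity $K^{(2)} = (K')^{(1)} z$ then forces $(K')^{(1)} = 0$; combined with $\deg K' \le 1$ this means $K'$ is a real constant, call it $\alpha$, and $K = \alpha z$. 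Simultaneously, $f$ is now of degree at most one.

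Next I would dispose of $\beta$ and $\gamma$ by looking at $R$. With $K' = \alpha$ constant, the first summand of $R$ in \eqref{eq:cubic-vfld} is already quadratic, so the only possible degree-three contribution comes from $-2(\beta x + \gamma y)(x^2+y^2 - a^2)$, whose degree-three part is $-2(\beta x + \gamma y)(x^2+y^2)$. The requirement $\deg R \le 2$ therefore forces $\beta = \gamma = 0$. Substituting $K = \alpha z$, $\beta = \gamma = 0$, and $f$ linear back into \eqref{eq:cubic-vfld} collapses it directly to the shape \eqref{eq:quad-form}.

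There is no serious obstacle here: once \Cref{thm:cubic-vfld} is invoked, the argument is pure degree counting plus one symmetrization trick to decouple $K^{(2)}$ from $f^{(2)}$. The only bookkeeping point worth flagging is that the constant term of $K$ already vanishes by \Cref{thm:cubic-vfld}, so no separate argument is needed to rule out a degree-zero piece of $K$.
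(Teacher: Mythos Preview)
Your proposal is correct and follows essentially the same route as the paper: both invoke \Cref{thm:cubic-vfld}, set the degree-three homogeneous parts of $P$ and $Q$ to zero, combine them via $xP^{(3)}+yQ^{(3)}$ to obtain $K^{(2)}=0$ and then $f^{(2)}=0$, and finally read off $\beta=\gamma=0$ from the degree-three part of $R$. Your write-up is slightly more explicit about why $K'$ reduces to a constant (via $K^{(2)}=(K')^{(1)}z$), but the argument is the same.
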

\begin{proof}
    Let $\chi=(P,Q,R)$ is a quadratic vector field on $\mathbb{T}^2$. Then, $P,Q,$ and $R$ are of the form \eqref{eq:cubic-vfld} such that the coefficients of the degree 3 monomials are zero. So, 
    $$\frac{1}{4}K^{(2)}x+f^{(2)}y=0~\mbox{and}~\frac{1}{4}K^{(2)}y-f^{(2)}x=0.$$
    Hence, $\frac{1}{4}K^{(2)}(x^2+y^2)=0$, which implies that $K^{(2)}=0$. Consequently, $f^{(2)}=0$. So, $K=\alpha z$ for some $\alpha\in \RR$. Since $R$ does not contain any degree 3 monomial, we have that $\beta=\gamma=0$. Thus, we get the result.
\end{proof}
Note that Llibre and Medrado \cite{LlMe11} gave a classification of quadratic vector fields on $\mathbb{T}^2$, but the above classification is easy to use.
\begin{corollary}
Any degree one vector field on $\mathbb{T}^2$ is a pseudo-type-1 vector field.
\end{corollary}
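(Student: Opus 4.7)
The plan is to specialize Corollary \ref{quad-classification} to the degree one case, since any degree one vector field on $\mathbb{T}^2$ is in particular a vector field of degree at most two on $\mathbb{T}^2$, hence falls under the hypothesis of that corollary. This immediately yields the representation
\begin{equation*}
P = \tfrac{\alpha}{4}xz + fy, \quad Q = \tfrac{\alpha}{4}yz - fx, \quad R = \tfrac{\alpha}{2}\bigl(-a^2(x^2+y^2) + z^2 + a^4 - 1\bigr)
\end{equation*}
for some $\alpha \in \RR$ and some linear polynomial $f$.

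Next, I would impose $\deg P, \deg Q, \deg R \leq 1$. Looking at $R$, the only possible degree two contribution is $-\tfrac{a^2 \alpha}{2}(x^2+y^2)$, so $R$ is linear if and only if $\alpha = 0$. Substituting $\alpha = 0$ leaves $R = 0$ and reduces $P, Q$ to $P = fy$, $Q = -fx$ with $f$ linear; but then $\deg(fy) \leq 1$ forces $f$ to be a constant $A \in \RR$ (and $\deg(-fx) \leq 1$ gives the same conclusion). Hence $P = Ay$, $Q = -Ax$, $R = 0$.

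This is exactly the form of a pseudo-type-$n$ vector field on $\mathbb{T}^2$ with $n = 1$ furnished by Theorem \ref{thm:hom-vfld}: the constant $A$ plays the role of the homogeneous polynomial of degree $n - 1 = 0$, and $P, Q$ are homogeneous of degree $1$. No real obstacle appears here, since the argument is a direct specialization of the already-established quadratic classification; the only care needed is to verify that the degree constraint on $R$ forces $\alpha = 0$ and that the resulting $(P, Q, R)$ indeed satisfies the pseudo-type-$1$ definition.
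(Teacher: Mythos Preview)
Your proof is correct and follows essentially the same route as the paper: specialize the quadratic classification (Corollary~\ref{quad-classification}) to degree one and eliminate the degree-two monomials, forcing $\alpha=0$ and then $f$ to be a constant. One minor slip --- the degree-two part of $R$ is $\tfrac{\alpha}{2}\bigl(-a^2(x^2+y^2)+z^2\bigr)$, not just $-\tfrac{a^2\alpha}{2}(x^2+y^2)$ --- but this does not affect your conclusion that $\alpha=0$.
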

\begin{proof}
    Suppose that $(P,Q,R)$ is a degree one vector field on $\mathbb{T}^2$. Hence, $P,Q,$ and $R$ are of the form \eqref{eq:quad-form} without any degree two monomial. Hence, $P=ay,Q=-ax,R=0$ for some $a\in \RR$. Hence, $(P,Q,R)$ is a pseudo-type-1 vector field.
\end{proof}

\begin{proposition}\label{prop:kolm-rational-fi}
    Every cubic Kolmogorov vector field on $\mathbb{T}^2$ has a rational first integral.
\end{proposition}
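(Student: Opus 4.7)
The plan is to exhibit an explicit rational first integral via the Darboux method. By \Cref{cor:cubic-kolm}, any cubic Kolmogorov vector field $\chi$ on $\mathbb{T}^2$ has the form \eqref{eq:cubic-kolm}, and the defining polynomial $g := (x^2+y^2-a^2)^2 + z^2 - 1$ of $\mathbb{T}^2$ is automatically a Darboux polynomial with cofactor $K = c_2 z^2$. Thus, to produce a rational first integral I would search for another Darboux polynomial whose cofactor is a rational multiple of $K$; the natural candidate, suggested by the rotational structure visible in the $P,Q$ components, is $x^2+y^2$.

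The key calculation is $\chi(x^2+y^2) = 2xP + 2yQ$. Plugging in $P = \tfrac{1}{4}Kx + c_1 x y^2$ and $Q = \tfrac{1}{4}Ky - c_1 x^2 y$, the two $c_1$-contributions $+2c_1 x^2 y^2$ and $-2c_1 x^2 y^2$ cancel, leaving $\chi(x^2+y^2) = \tfrac{1}{2}K\,(x^2+y^2)$. Hence $x^2+y^2$ is a Darboux polynomial of $\chi$ with cofactor $K/2$, and consequently $(x^2+y^2)^2$ is a Darboux polynomial with cofactor exactly $K$, matching the cofactor of $g$.

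Setting
\[
H := \frac{(x^2+y^2)^2}{(x^2+y^2-a^2)^2 + z^2 - 1},
\]
a direct application of the quotient rule together with the two matching cofactors gives $\chi H = \bigl((x^2+y^2)^2/g^2\bigr)(2\cdot\tfrac{K}{2} - K)\,g = 0$ on $U := \RR^3 \setminus \mathbb{T}^2$. Since $H$ is manifestly non-constant (the numerator is independent of $z$, the denominator is not), $H$ is a non-constant rational first integral of $\chi$ defined on the open dense set $U$, which proves the proposition.

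The only non-routine step is spotting that $x^2+y^2$ is a Darboux polynomial; this hinges on the precise cancellation between the Kolmogorov-type terms $c_1 x y^2$ and $-c_1 x^2 y$ in $P$ and $Q$. Once that cancellation is observed, the argument requires no case split on $c_1$ or $c_2$: the construction is uniform and produces the same $H$ regardless of whether either parameter vanishes (in the degenerate case $c_2=0$, both $(x^2+y^2)^2$ and $g$ are individually first integrals, and $H$ remains well-defined).
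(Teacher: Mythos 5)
Your proof is correct and follows essentially the same route as the paper: both identify $x^2+y^2$ as a Darboux polynomial with cofactor $\tfrac{1}{2}K$ alongside the torus itself with cofactor $K$, and combine them into a rational first integral (the paper writes $((x^2+y^2-a^2)^2+z^2-1)(x^2+y^2)^{-2}$, the reciprocal of your $H$, and cites the Darboux integrability theorem rather than verifying the quotient-rule cancellation directly, but these are cosmetic differences). Your version has the mild advantage of being self-contained, and the reciprocal of your $H$ is defined on a neighborhood of $\mathbb{T}^2$ if one prefers that.
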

\begin{proof}
An arbitrary cubic Kolmogorov vector field on $\mathbb{T}^2$ is given by \eqref{eq:cubic-kolm}. Note that $x^2+y^2=0$ and the torus $(x^2+y^2-a^2)^2+z^2-1=0$ are invariant hypersurfaces with the dependent cofactors $\frac{1}{2}K$ and $K$, respectively. Hence, by the Darboux Integrability Theory \cite[Theorem 5]{LlZh02}, $((x^2+y^2-a^2)^2+z^2-1)(x^2+y^2)^{-2}$ is a rational first integral of any cubic Kolmogorov vector field on $\mathbb{T}^2$.
\end{proof}
\begin{remark}
    Using similar arguments of the proof of Proposition \ref{prop:kolm-rational-fi}, the function given by $H:=((x^2+y^2-a^2)^2+z^2-1)(x^2+y^2)^{-2}$ is a rational first integral of any quadratic vector field on $\mathbb{T}^2$ as well.
\end{remark}
\begin{proposition}\label{lie-invariant-hypersurface}
    If the level set $\{f=0\}$ is an invariant algebraic hypersurface of two vector fields $X$ and $Y$ in $\RR^n$, then $\{f=0\}$ is an invariant algebraic hypersurface of the Lie bracket $[X,Y]$ as well.
\end{proposition}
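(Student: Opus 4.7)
The plan is to use the derivation property of vector fields directly on the defining identity for invariance. By hypothesis, there exist polynomials $K_X, K_Y \in \RR[x_1,\dots,x_n]$ (the cofactors) such that
\begin{equation*}
Xf = K_X f \quad \text{and} \quad Yf = K_Y f.
\end{equation*}
To prove that $\{f=0\}$ is invariant under $[X,Y]$, I need to exhibit a polynomial $K$ with $[X,Y]f = Kf$.

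First I would expand $[X,Y]f = X(Yf) - Y(Xf)$ using the above relations, obtaining $X(K_Y f) - Y(K_X f)$. Then, since $X$ and $Y$ act as derivations on the polynomial ring, I apply the Leibniz rule to get
\begin{equation*}
[X,Y]f = (XK_Y)f + K_Y(Xf) - (YK_X)f - K_X(Yf).
\end{equation*}
Substituting $Xf = K_X f$ and $Yf = K_Y f$ in the middle terms produces $K_Y K_X f - K_X K_Y f = 0$, and I am left with
\begin{equation*}
[X,Y]f = \bigl(X K_Y - Y K_X\bigr) f.
\end{equation*}

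The final step is to observe that $X K_Y - Y K_X$ is indeed a polynomial, since each $K_X, K_Y \in \RR[x_1,\dots,x_n]$ and the vector fields $X, Y$ have polynomial components, so applying them to a polynomial yields a polynomial. Hence $K := X K_Y - Y K_X$ is the desired cofactor of $[X,Y]$ for $\{f=0\}$. There is no real obstacle here: the argument is a two-line computation once the Leibniz rule is invoked, and the cancellation of the cross terms is automatic because polynomial multiplication is commutative.
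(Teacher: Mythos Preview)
Your proof is correct and follows exactly the same approach as the paper: both apply the Leibniz rule to $[X,Y]f = X(Yf) - Y(Xf)$ after substituting the cofactor relations, obtaining the cofactor $XK_Y - YK_X$. You spell out the cancellation of the cross terms $K_Y K_X f - K_X K_Y f$ and the polynomiality of the resulting cofactor a bit more explicitly than the paper does, but the argument is identical.
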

\begin{proof}
The hypersurface $\{f=0\}$ is invariant under the vector fields $X$ and $Y$. So, there exist $K_1,K_2\in \RR[x_1,...,x_n]$ such that $Xf=K_1f$ and $Yf=K_2f$. Now, 
\begin{dmath*}
    [X,Y]f=(XY-YX)(f)=X(Yf)-Y(Xf)=X(K_2f)-Y(K_1f)=X(K_2)f+K_2X(f)-Y(K_1)f-K_1Y(f)=(X(K_2)-Y(K_1))f.
\end{dmath*}
Hence, $\{f=0\}$ is invariant for the vector field $[X,Y]$ with cofactor $X(K_2)-Y(K_1)$.
\end{proof}
\begin{remark}
    If two vector fields have a common first integral $f$, then their Lie bracket also has the first integral $f$.
\end{remark}
\begin{proof}[\textbf{Proof of \Cref{lie-bracket}}]
Let $X=(P_0,Q_0,R_0)$ and $Y=(P_1,Q_1,R_1)$, where
$$P_0=\frac{\alpha_0}{4}xz+f_0y,~Q_0=\frac{\alpha_0}{4}yz-f_0x,~R_0=\frac{\alpha_0}{2}(-a^2(x^2+y^2)+z^2+a^4-1)$$
and
$$P_1=\frac{\alpha_1}{4} xz+f_1y,~Q_1=\frac{\alpha_1}{4}yz-f_1x,~R_1=\frac{\alpha_1}{2}(-a^2(x^2+y^2)+z^2+a^4-1)$$
for some $\alpha_0,\alpha_1\in \RR$ and linear polynomials $f_0,f_1$. Now, we compute the Lie bracket $[X,Y]$. If $[X,Y]=(P,Q,R)$ then one can show that $\deg P,\deg Q\leq 3$ and $R=0$. By Proposition \ref{lie-invariant-hypersurface}, $[X,Y]$ is a vector field on $\mathbb{T}^2$. So, we have the following.
\begin{equation}\label{lie-bracket-vfld}
    4(x^2+y^2-a^2)(Px+Qy)=K((x^2+y^2-a^2)^2+z^2-1)
\end{equation} for some $K\in \RR[x,y,z]$ with $\deg K\leq 2$.

Hence, $x^2+y^2-a^2$ divides $K$, say $K=c(x^2+y^2-a^2)$ for some $c\in \RR$. Then \eqref{lie-bracket-vfld} becomes
\begin{equation*}
    4(Px+Qy)=c((x^2+y^2-a^2)^2+z^2-1).
\end{equation*} Since the left side has no term $z^2$, we obtain $c=0$. This gives $Px+Qy=0$. Then, by Lemma \ref{sum-2-poly-zero}, $P=Ay,~Q=-Ax$ for some $A\in \RR[x,y,z]$ with $\deg A\leq 2$.

The functions $f:=x^2+y^2$ and $g:=z$ are two functionally independent first integrals of $[X,Y]$, which makes $[X,Y]$ completely integrable.
\end{proof}

\begin{proof}[\textbf{Proof of \Cref{hom-lemma}}]
    The following must be satisfied since $\chi$ is a vector field on $\mathbb{T}^2$.
    \begin{equation}\label{eq:torus-inv}
        4(x^2+y^2-a^2)(Px+Qy)+2Rz=K((x^2+y^2-a^2)^2+z^2-1)
    \end{equation}
    for some polynomial $K$ with $\deg K\leq n-1$.
    
    Suppose that $Px+Qy\neq 0$. Then $\deg K=n-1$. Observe that the degree of each monomial on the left side of \eqref{eq:torus-inv} is either $n+1,~n+3$, or $\deg R+1$. Hence, $\deg R=n-2$, and $K$ is a homogeneous polynomial. Note that $2Rz=K(a^4-1)$. So, by rewriting \eqref{eq:torus-inv}, we get
    $$4(x^2+y^2-a^2)(Px+Qy)=K((x^2+y^2-a^2)^2+z^2-a^4).$$
    So, $x^2+y^2-a^2$ must divide $K(z^2-a^4)$, which is not possible. So, $Px+Qy$ must be zero. Then \eqref{eq:torus-inv} implies that
    $$2Rz=K((x^2+y^2-a^2)^2+z^2-1).$$
    The above equation is valid only when $K=0$ because $R$ is a homogeneous polynomial. Hence, the result is proved.
\end{proof}

\begin{proof}[\textbf{Proof of \Cref{thm:hom-vfld}}]
Proof of the first part is similar to the proof of \cite[Theorem 4.11]{JaSa23}. To prove the second part, notice that $f:=x^2+y^2$ and $g:=z$ are two functionally independent first integrals for $\chi$.
\end{proof}

\begin{corollary}\label{lie-pseudo-2}
     The Lie bracket of two pseudo-type-2 vector fields on $\mathbb{T}^2$ is either zero or a pseudo-type-3 vector field.
\end{corollary}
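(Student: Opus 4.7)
The plan is to apply Theorem \ref{thm:hom-vfld} to put both vector fields in the normal form $X=(A_1y,-A_1x,0)$ and $Y=(A_2y,-A_2x,0)$, where $A_1$ and $A_2$ are homogeneous polynomials of degree $1$ in $x,y,z$. The goal is then to compute $[X,Y]$ explicitly and show that its first two components are homogeneous of degree $3$, its third component is $0$, and the ``$y$ vs.\ $-x$'' symmetry persists; this lets me invoke Theorem \ref{thm:hom-vfld} again to conclude that $[X,Y]$ is pseudo-type-$3$ (or vanishes).

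The main computation is short. The first component of $[X,Y]$ is $X(A_2y)-Y(A_1y)=A_2X(y)+yX(A_2)-A_1Y(y)-yY(A_1)$. Since $X(y)=-A_1x$ and $Y(y)=-A_2x$, the ``algebraic'' terms $-A_1A_2x$ and $+A_1A_2x$ cancel, leaving $y\bigl(X(A_2)-Y(A_1)\bigr)$. An entirely analogous cancellation in the second component yields $-x\bigl(X(A_2)-Y(A_1)\bigr)$, and the third component is $0$ because $X,Y$ have zero $z$-component. Setting $B:=X(A_2)-Y(A_1)$, I would record that
\begin{equation*}
[X,Y]=(By,-Bx,0).
\end{equation*}

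Next I would check that $B$ is homogeneous of degree $2$. Since $A_2$ is linear in $x,y,z$, its partial derivatives in $x$ and $y$ are constants, so $X(A_2)=A_1(y\,\partial_xA_2-x\,\partial_yA_2)$ is a product of two homogeneous polynomials of degree $1$, hence homogeneous of degree $2$; the same holds for $Y(A_1)$. Thus $B$ is homogeneous of degree $2$ (or zero), and $By,-Bx$ are homogeneous of degree $3$ (or zero). Proposition \ref{lie-invariant-hypersurface} guarantees that $[X,Y]$ is a vector field on $\mathbb{T}^2$, and so Theorem \ref{thm:hom-vfld} applied in degree $n=3$ (with $A=B$) identifies $[X,Y]$ as pseudo-type-$3$ whenever $B\neq 0$, completing the proof.

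There is essentially no obstacle: the whole argument rests on the structural cancellation that forces $[X,Y]$ to retain the form $(\,\cdot\,y,-(\,\cdot\,)x,0)$, and on tracking homogeneity degrees. The only point worth stating carefully is that $A_1$ and $A_2$ may contain a $z$-term, but since $X$ and $Y$ act on a polynomial of the form $ax+by+cz$ only through $\partial_x$ and $\partial_y$ (their $z$-component being zero), this contributes nothing unexpected and the degree count is unaffected.
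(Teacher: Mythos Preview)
Your argument is correct. Both you and the paper begin by writing $X=(A_1y,-A_1x,0)$ and $Y=(A_2y,-A_2x,0)$ via Theorem~\ref{thm:hom-vfld}, and both finish by noting that the coefficient polynomial in $[X,Y]$ is homogeneous of degree~$2$ because $A_1,A_2$ are homogeneous of degree~$1$. The difference lies in how the form $[X,Y]=(By,-Bx,0)$ is obtained: the paper invokes Theorem~\ref{lie-bracket}, which classifies the Lie bracket of \emph{any} two quadratic vector fields on $\mathbb{T}^2$ (and whose proof in turn uses the quadratic classification of Corollary~\ref{quad-classification}), whereas you carry out the bracket computation directly and exhibit the cancellation $-A_1A_2x+A_1A_2x=0$ by hand. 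Your route is more elementary and entirely self-contained---it does not rely on the ambient quadratic theory---while the paper's route is shorter on the page because the work has been absorbed into Theorem~\ref{lie-bracket}. One small remark: the final appeal to Theorem~\ref{thm:hom-vfld} is not really needed, since once you have $[X,Y]=(By,-Bx,0)$ with $B$ homogeneous of degree~$2$, the vector field is pseudo-type-$3$ by definition; Proposition~\ref{lie-invariant-hypersurface} already handles the invariance of $\mathbb{T}^2$.
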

\begin{proof}
Let $X$ and $Y$ be two pseudo-type-2 vector fields on $\mathbb{T}^2$. Then, by Theorem \ref{thm:hom-vfld}, $X=(Ay,-Ax,0)$ and $Y=(By,-Bx,0)$ for some homogeneous degree one polynomials $A$ and $B$. By Theorem \ref{lie-bracket}, $[X,Y]=(Cy,-Cx,0)$ for some $C\in \RR[x,y,z]$ with $\deg C\leq 2$. Observe that $C$ is either zero or a quadratic homogeneous polynomial since $A$ and $B$ are homogeneous. Hence, the Lie bracket of two pseudo-type-2 vector fields on $\mathbb{T}^2$ is either zero or a pseudo-type-3 vector field.
\end{proof}
Note that any degree one vector field on $\mathbb{T}^2$ is a pseudo-type-1 vector field. Hence, the Lie bracket of any two degree one  vector fields on $\mathbb{T}^2$ is zero. So, the Lie brackets of two degree one vector fields and two degree two vector fields on $\mathbb{T}^2$ have the $z$-component always zero. But this is not true in general.
\begin{example}
    Consider the following two vector fields in $\RR^3$:
    $$X=(x^2z,xyz,2x(-a^2(x^2+y^2)+z^2+a^4-1))~\mbox{and}~Y=(y^3,-xy^2,0).$$
    $X$ and $Y$ are vector fields on $\mathbb{T}^2$ due to Theorem \ref{thm:cubic-vfld} with the parameter sets\newline
    \{$K=4xz,f=\beta=\gamma=0$\} and \{$f=y^2,K=\beta=\gamma=0$\}, respectively. Then, the $z$-component of $[X,Y]$ is $-2y^3(-a^2(x^2+y^2) +z^2 +a^4 -1)$.
\end{example}

\section{Invariant meridians and parallels}\label{sec:inv-mer-par}
In this section, we study invariant meridians and parallels of cubic vector fields on $\mathbb{T}^2$. We find all possible invariant meridians and parallels of a cubic Kolmogorov vector field on $\mathbb{T}^2$. We show necessary and sufficient conditions for a cubic vector field on $\mathbb{T}^2$ when invariant meridians and parallels are periodic orbits or limit cycles. We find the maximum number of invariant meridians and parallels of pseudo-type-$n$ vector fields on $\mathbb{T}^2$. We also provide a different and simple proof for the sharp bound of invariant meridians of a polynomial vector field on $\mathbb{T}^2$.

Llibre and Medrado \cite[Theorem 1]{LlMe11} proved that if a vector field $\chi$ of degree $m>1$ on $\mathbb{T}^2$ has finitely many invariant meridians then it has at most $2(m-1)$ invariant meridians. While the statement is correct, we have identified an oversight in the proof. The authors computed that $\mathcal{E}_{\langle x,y\rangle}(\chi)=(x^2+y^2)\dot{\theta}$ and hence followed $x^2+y^2$ divides $\mathcal{E}_{\langle x,y\rangle}(\chi)$. But this is true only when $\dot{\theta}$ is a polynomial. For example, consider the vector field $\chi$ on $\mathbb{T}^2$ of the form \eqref{eq:cubic-vfld} with $\beta,\gamma\neq 0$. Then, $x^2+y^2$ is not a factor of $\mathcal{E}_{\langle x,y\rangle}(\chi)$. Here, we provide a different proof.
\begin{theorem}\label{thm:bound-mer}
    Suppose that a degree $n$ polynomial vector field $\chi$ on $\mathbb{T}^2$ has finitely many invariant meridians taking into account multiplicities. Then, $\chi$ can have at most $2(n-1)$ invariant meridians. Moreover, this bound is sharp.
\end{theorem}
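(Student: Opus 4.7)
My plan is to combine the extactic-polynomial criterion with a structural identity on the top total-degree part of $\chi$ that is forced by the torus invariance. First, take $W=\langle x,y\rangle$; the extactic polynomial is then $\mathcal{E}_W(\chi)=xQ-yP$, and by \Cref{extactic-polynomial} (together with the multiplicity convention recalled just after it), if $L_1,\dots,L_r$ are the distinct invariant meridian planes with multiplicities $e_1,\dots,e_r$ and $m:=\sum_{i=1}^{r}e_i$, then $L_1^{e_1}\cdots L_r^{e_r}$ divides $xQ-yP$ in $\RR[x,y,z]$.

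Next I derive the structure of the top-degree part of $xQ-yP$. Expanding the torus-invariance identity $4(x^2+y^2-a^2)(xP+yQ)+2zR=KF$ and comparing total-degree $n+3$ homogeneous components, only $4(x^2+y^2)(xP^{(n)}+yQ^{(n)})$ on the left and $K^{(n-1)}(x^2+y^2)^2$ on the right contribute, so
$$
xP^{(n)}+yQ^{(n)}=\tfrac{1}{4}K^{(n-1)}(x^2+y^2).
$$
Rewriting this as $x\bigl(P^{(n)}-\tfrac14 K^{(n-1)}x\bigr)+y\bigl(Q^{(n)}-\tfrac14 K^{(n-1)}y\bigr)=0$ and invoking \Cref{sum-2-poly-zero} produces a homogeneous $A\in\RR[x,y,z]$ of degree $n-1$ with $P^{(n)}=\tfrac14 K^{(n-1)}x+Ay$ and $Q^{(n)}=\tfrac14 K^{(n-1)}y-Ax$. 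A direct expansion then yields the key identity
$$
(xQ-yP)^{(n+1)}=-A(x^2+y^2).
$$

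Since $L_1^{e_1}\cdots L_r^{e_r}$ is homogeneous in $(x,y)$ and divides $xQ-yP$, it also divides every total-degree homogeneous component, in particular $-A(x^2+y^2)$. Each $L_i$ is a real linear form in $x,y$, while $x^2+y^2$ is irreducible over $\RR$, so $\gcd(L_i,x^2+y^2)=1$ in $\RR[x,y,z]$, forcing $L_1^{e_1}\cdots L_r^{e_r}\mid A$. Since $\deg A=n-1$, this yields $m\le n-1$ and hence at most $2(n-1)$ invariant meridians. For sharpness I take $A\in\RR[x,y]$ a product of $n-1$ distinct real linear forms and set $\chi=(Ay,-Ax,0)$; by \Cref{thm:hom-vfld} this is a pseudo-type-$n$ vector field on $\mathbb{T}^2$, and checking that $L$ is an invariant meridian plane exactly when $L\mid A$ is immediate, producing the full $2(n-1)$ meridians.

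The entire content of the new argument, and the only substantive step, is the top-degree identity $(xQ-yP)^{(n+1)}=-A(x^2+y^2)$. It replaces the earlier (incorrect) assertion that $(x^2+y^2)$ divides $\mathcal{E}_W(\chi)$ outright: that divisibility may genuinely fail on all of $xQ-yP$, but it necessarily holds on its leading total-degree piece, which is all the counting argument needs.
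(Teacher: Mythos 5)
Your strategy is the same as the paper's: restrict the extactic polynomial to $W=\langle x,y\rangle$, extract the top homogeneous part of $xQ-yP$ from the degree-$(n+3)$ component of the invariance identity via Lemma~\ref{sum-2-poly-zero}, and count linear factors against $-A(x^2+y^2)$. Your packaging is in fact cleaner than the paper's (the observation that a homogeneous divisor must divide each homogeneous component of $xQ-yP$ is exactly the right way to isolate the leading piece, and your sharpness example is the one underlying Proposition~\ref{ch-invariant-mer} rather than an external citation).

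There is, however, one step that fails as written: ``Since $\deg A=n-1$, this yields $m\le n-1$.'' The polynomial $A$ produced by Lemma~\ref{sum-2-poly-zero} is homogeneous of degree $n-1$ \emph{or zero}, and $A=0$ genuinely occurs for vector fields on $\mathbb{T}^2$ (e.g.\ a quadratic field of the form \eqref{eq:quad-form} with $f$ constant and $\alpha\neq 0$ has $P^{(2)}=\tfrac{\alpha}{4}xz$, $Q^{(2)}=\tfrac{\alpha}{4}yz$, so $A=0$ while $K^{(1)}\neq 0$). When $A=0$ the divisibility $L_1^{e_1}\cdots L_r^{e_r}\mid A$ is vacuous and gives no bound on $m$, while the crude estimate $m\le\deg\mathcal{E}_W\le n$ is off by one. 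The repair is short and uses material you already have implicitly: compare the degree-$(n+2)$ components of the invariance identity to get $xP^{(n-1)}+yQ^{(n-1)}=\tfrac14K^{(n-2)}(x^2+y^2)$, hence $(xQ-yP)^{(n)}=-B(x^2+y^2)$ with $B$ homogeneous of degree $n-2$ or zero; if $A=0$ and $B\neq0$ your argument applied to this component gives $m\le n-2$, and if $A=B=0$ then $\deg\mathcal{E}_W\le n-1$ (with $\mathcal{E}_W\neq0$ by the finiteness hypothesis), so $m\le n-1$ directly. With that case added, your proof is complete.
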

\begin{proof}
   Suppose that $\chi=(P,Q,R)$. Since $\chi$ is a vector field on $\mathbb{T}^2$, it satisfies
\begin{equation}\label{eq:n-vfld}
    4(x^2+y^2-a^2)(Px+Qy)+2Rz=K((x^2+y^2-a^2)^2+z^2-1)
\end{equation}
for some $K\in \RR[x,y,z]$ with $\deg K\leq n-1$. Now, by comparing the degree $n+3$ and $n+2$ terms in \eqref{eq:n-vfld}, we get
\begin{equation}\label{eq:n+3-terms}
    4(x^2+y^2)(P^{(n)}x+Q^{(n)}y)=K^{(n-1)}(x^2+y^2)^2
\end{equation}
and 
\begin{equation}\label{eq:n+2-terms}
    4(x^2+y^2)(P^{(n-1)}x+Q^{(n-1)}y)=K^{(n-2)}(x^2+y^2)^2,
\end{equation}
respectively. Hence,
$$P^{(n)}x+Q^{(n)}y=\frac{x^2+y^2}{4}K^{(n-1)}~\mbox{and}~P^{(n-1)}x+Q^{(n-1)}y=\frac{x^2+y^2}{4}K^{(n-2)}.$$
So, by Lemma \ref{sum-2-poly-zero}, $$P^{(n)}=\frac{x}{4}K^{(n-1)}+Ay,~Q^{(n)}=\frac{y}{4}K^{(n-1)}-Ax,$$ 
$$P^{(n-1)}=\frac{x}{4}K^{(n-2)}+By, ~Q^{(n-1)}=\frac{y}{4}K^{(n-2)}-Bx,$$ for some $A,B\in \RR[x,y,z]$ with $\deg A= n-1, \deg B= n-2$.
The extactic polynomial of $\chi$ associated with $\langle x,y\rangle$ is 
$\mathcal{E}_{\langle x,y\rangle}(\chi)=\begin{vmatrix}
    x&y\\
    P&Q
\end{vmatrix}=Qx-Py$. Observe that the degree of $\mathcal{E}_{\langle x,y\rangle}(\chi)$ is at most $n+1$. Suppose that $\mathcal{E}_{\langle x,y\rangle}(\chi)$ has $\ell$ factors of the form $ax+by$, where $a,b\in \RR$. Due to Proposition \ref{extactic-polynomial}, it is enough to prove that $\ell \leq n-1$.

Let $\ell>n-1$. Then, $\mathcal{E}_{\langle x,y\rangle}(\chi)=(Q^{(n)}+Q^{(n-1)})x-(P^{(n)}+P^{(n-1)})y=-(A+B)(x^2+y^2)$. Note that $x^2+y^2$ has no linear factor over $\RR$. So, we get a contradiction. Thus, $\ell \leq n-1$.

To prove the bound is sharp, one can see the example in the proof of \cite[Theorem 1]{LlMe11}.
\end{proof}
\begin{proposition}\label{ch-invariant-mer}
A pseudo-type-$n$ vector field on $\mathbb{T}^2$ can have at most $2(n-1)$ invariant meridians and this bound is sharp. Any invariant meridian of a pseudo-type-$n$ vector field $\chi$ on $\mathbb{T}^2$ is a subset of $s(\chi)$.
\end{proposition}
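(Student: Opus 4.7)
The plan is to combine the structural description of pseudo-type-$n$ vector fields on $\mathbb{T}^2$ given by Theorem \ref{thm:hom-vfld}, namely that any such $\chi$ has the form $\chi=(Ay,-Ax,0)$ for a homogeneous polynomial $A$ of degree $n-1$, with the extactic polynomial criterion of Proposition \ref{extactic-polynomial} applied to $W=\langle x,y\rangle$. The single computation driving everything is
$$\mathcal{E}_W(\chi)=\begin{vmatrix} x & y \\ P & Q \end{vmatrix}=xQ-yP=-A(x^{2}+y^{2}),$$
which is non-zero whenever $\chi$ is non-trivial.

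For the upper bound, every invariant meridian plane has the form $ax+by=0$, and by Proposition \ref{extactic-polynomial} the linear form $ax+by$ must divide $\mathcal{E}_W(\chi)$. Since $x^{2}+y^{2}$ is irreducible over $\RR$ (it admits no real linear factor), any such divisor must come from $A$; counted with multiplicity there can be at most $\deg A=n-1$ of them. As each meridian plane cuts $\mathbb{T}^2$ in a pair of disjoint meridians, this yields the bound $2(n-1)$. For sharpness I would take $A=\prod_{k=1}^{n-1}(a_k x+b_k y)$ with the $n-1$ factors pairwise non-proportional; a one-line verification that $\chi(a_k x+b_k y)=A(a_k y-b_k x)$ is divisible by $a_k x+b_k y$ (since $a_k x+b_k y$ already divides $A$) confirms that each such plane is invariant, producing exactly $2(n-1)$ invariant meridians.

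For the last assertion, if $ax+by=0$ is an invariant meridian plane then the factorization argument above forces $(ax+by)\mid A$, so $A$ vanishes identically on that plane; consequently $P=Ay$, $Q=-Ax$ and $R=0$ all vanish on the meridian, placing the entire meridian inside $s(\chi)$. Equivalently, the conclusion is immediate from Proposition \ref{prop:sing-pt}, which identifies $s(\chi)\cap\mathbb{T}^2$ with $\{A=0\}\cap\mathbb{T}^2$, and any invariant meridian is contained in $\{A=0\}\cap\mathbb{T}^2$. I do not anticipate a genuine obstacle here; the only point demanding attention is the use of irreducibility of $x^{2}+y^{2}$ over $\RR$ to confine all real linear factors of $\mathcal{E}_W(\chi)$ to $A$, which is precisely what pins down the count.
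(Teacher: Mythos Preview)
Your proof is correct and follows essentially the same route as the paper: both invoke Theorem~\ref{thm:hom-vfld} to write $\chi=(Ay,-Ax,0)$, compute $\mathcal{E}_{\langle x,y\rangle}(\chi)=-A(x^2+y^2)$, use Proposition~\ref{extactic-polynomial} together with the absence of real linear factors of $x^2+y^2$ to bound the number of invariant meridian planes by $\deg A=n-1$, exhibit $A=\prod_{i=1}^{n-1}(a_ix+b_iy)$ for sharpness, and conclude the singular-set statement from $(ax+by)\mid A$. Your write-up is in fact a touch more explicit than the paper's (you spell out the irreducibility of $x^2+y^2$ and the verification that each $a_kx+b_ky=0$ is genuinely invariant), but the argument is the same.
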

\begin{proof}
Consider an arbitrary pseudo-type-$n$ vector field  $(Ay,-Ax,0)$ on $\mathbb{T}^2$ for some $A\in \RR[x,y,z]$ of degree $n-1$. Then, the extactic polynomial associated with $\langle x,y\rangle$ is $$\mathcal{E}_{\langle x,y\rangle}(\chi)=\begin{vmatrix}
        x&y\\
        Ay&-Ax
    \end{vmatrix}=-A(x^2+y^2).$$
If $ax+by=0$ gives an invariant meridian, then by Proposition \ref{extactic-polynomial}, $ax+by$ must divide $A$. Observe that $A$ can have at most $n-1$ factors of the form $ax+by$. Hence, $(Ay,-Ax,0)$ can have at most $2(n-1)$ invariant meridians. To show the bound is sharp, consider $A=\prod_{i=1}^{n-1}(a_ix+b_iy)$. Then, $a_ix+b_iy=0$ are invariant meridian planes of $\chi$ for $i=1,\ldots\!, n-1$. So, the vector field has $2(n-1)$ invariant meridians.

From the first part of the proof, we observe that if $ax+by=0$ is an invariant meridian plane then $ax+by$ is a factor of $A$. Hence, any point on the invariant meridian is a singular point of $\chi$.
\end{proof}
\begin{remark}
    No invariant meridian of a pseudo-type-$n$ vector field on $\mathbb{T}^2$ is a periodic orbit.
\end{remark}
Next, we look at the cubic vector fields on $\mathbb{T}^2$ with 2(3-1)=4 invariant meridians.
\begin{theorem}\label{4-mer}
    Suppose that $\chi$ is a cubic vector field on $\mathbb{T}^2$ given by \eqref{eq:cubic-vfld}. Then $\chi$ has exactly four invariant meridians given by the planes $a_ix+b_iy=0$ for $i=1,2$ if and only if $f=c\prod\limits_{i=1}^2 (a_ix+b_iy)$ for some $c\in \RR\setminus \{0\}$ and $\beta=\gamma=0$.
\end{theorem}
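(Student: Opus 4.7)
The plan is first to characterise when a single meridian plane $ax+by=0$ is invariant for a cubic vector field of the form \eqref{eq:cubic-vfld}, and then to use that characterisation to translate the hypothesis of four invariant meridians into algebraic conditions on $f$, $\beta$, and $\gamma$. I would directly compute $\chi(ax+by)=aP+bQ$ from \eqref{eq:cubic-vfld} and rearrange to obtain
\[
\chi(ax+by)=\tfrac14 K(ax+by)+f(ay-bx)+(a\beta+b\gamma)z,
\]
so that invariance of the plane is equivalent to $ax+by$ dividing $f(ay-bx)+(a\beta+b\gamma)z$. Parametrising the plane by $(x,y,z)=(-bs,as,z)$ and substituting, this expression becomes $(a^2+b^2)s\,f(-bs,as,z)+(a\beta+b\gamma)z$, which must vanish identically in $s$ and $z$. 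Setting $s=0$ forces $a\beta+b\gamma=0$, and the surviving term then forces $f(-bs,as,z)\equiv 0$, i.e.\ $(ax+by)\mid f$. I thus obtain the crisp criterion: \emph{$ax+by=0$ is invariant for $\chi$ if and only if $a\beta+b\gamma=0$ and $(ax+by)\mid f$.}

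For the forward implication, I would apply this criterion to each of the two meridian planes $a_ix+b_iy=0$. The conditions $a_i\beta+b_i\gamma=0$ form a homogeneous linear system in $(\beta,\gamma)$ with coefficient matrix $\begin{pmatrix}a_1&b_1\\a_2&b_2\end{pmatrix}$; because the two planes are distinct, this matrix is nonsingular and forces $\beta=\gamma=0$. The divisibility conditions then say that the coprime linear forms $a_1x+b_1y$ and $a_2x+b_2y$ both divide $f$, so their degree-$2$ product does too, and since $\deg f\leq 2$ I conclude $f=c\prod_{i=1}^{2}(a_ix+b_iy)$ for some $c\in\RR$. To rule out $c=0$, I would note that $c=0$ combined with $\beta=\gamma=0$ makes both invariance conditions trivial for every $(a,b)$, making every meridian plane invariant and contradicting the hypothesis that $\chi$ has only four invariant meridians (counted with multiplicity).

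The converse is an immediate verification: with $\beta=\gamma=0$ and $f=c\prod(a_ix+b_iy)$, $c\ne 0$, condition (i) becomes automatic while condition (ii) picks out precisely those planes defined by linear factors of $f$; since $f$ has degree exactly $2$ with the given factorisation, the invariant meridian planes are exactly $a_ix+b_iy=0$ for $i=1,2$, yielding four invariant meridians (with the appropriate multiplicity should the two planes coincide). The main obstacle I anticipate is the characterisation step itself: one must observe that $ay-bx$ restricts to $(a^2+b^2)s$ on the parametrised plane and that $a^2+b^2\neq 0$, so the factor of $s$ can be cancelled to conclude that $f$ vanishes on the plane. Once this characterisation is in hand, the remainder is routine linear algebra on $(\beta,\gamma)$ together with unique factorisation of a degree-$2$ form.
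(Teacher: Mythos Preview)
Your proof is correct and takes a genuinely different route from the paper's.

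The paper works entirely through the extactic polynomial: it computes $\mathcal{E}_{\langle x,y\rangle}(\chi)=-f(x^2+y^2)+z(\gamma x-\beta y)$, writes this as $g\cdot\prod_{i=1}^2(a_ix+b_iy)$ using Proposition~\ref{extactic-polynomial}, and then peels off the conditions on $f^{(2)},f^{(1)},f^{(0)},\beta,\gamma$ by comparing homogeneous components on both sides. Your approach bypasses the extactic machinery altogether: you derive a clean invariance criterion for a single meridian plane directly from the form \eqref{eq:cubic-vfld}, namely that $ax+by=0$ is invariant if and only if $a\beta+b\gamma=0$ and $(ax+by)\mid f$, and then the forward direction becomes elementary linear algebra (the $2\times 2$ system in $(\beta,\gamma)$) plus unique factorisation of a degree-$2$ form.

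What each buys: your argument is more self-contained and transparent, since it does not invoke Proposition~\ref{extactic-polynomial} or require tracking degrees through a $4\times 4$ factorisation. The paper's argument, on the other hand, is indifferent to whether the two linear forms $a_ix+b_iy$ are proportional, because the divisibility $\prod_i(a_ix+b_iy)\mid\mathcal{E}_W(\chi)$ already encodes multiplicity; your linear-algebra step explicitly uses that the two planes are distinct to make the coefficient matrix nonsingular. Given that the theorem speaks of four meridians arising from two planes, your reading is the natural one, and you do flag the degenerate case in the converse, so this is a difference in scope rather than a gap.
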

\begin{proof}
Suppose that $\chi$ has exactly four invariant meridians given by the planes $a_ix+b_iy=0$ for $i=1,2$. The extactic polynomial of $\chi=(P,Q,R)$ given by \eqref{eq:cubic-vfld} associated with $W=\langle x,y\rangle$ is
$$ \mathcal{E}_W(\chi)=\begin{vmatrix}
        x &y\\
        P &Q
    \end{vmatrix}=\begin{vmatrix}
        x &y\\
        \frac{1}{4}Kx+fy+\beta z &\frac{1}{4}Ky-fx+\gamma z
    \end{vmatrix}=-f(x^2+y^2)+z(\gamma x -\beta y).$$
Notice that $\mathcal{E}_W(\chi)$ is a degree 4 polynomial. By Proposition \ref{extactic-polynomial}, there exists $g\in \RR[x,y,z]$ of degree 2 such that
\begin{equation}\label{mer-extactic}
    -f(x^2+y^2)+z(\gamma x -\beta y)=\mathcal{E}_W(\chi)=g\prod\limits_{i=1}^2(a_ix+b_iy).
\end{equation}
Hence, $-f^{(2)}(x^2+y^2)=g^{(2)}\prod\limits_{i=1}^2(a_ix+b_iy)$. We know that $x^2+y^2$ has no linear factor over $\RR$. Thus, $\prod\limits_{i=1}^2(a_ix+b_iy)$ must divide $f^{(2)}$. So, $f^{(2)}=c \prod\limits_{i=1}^2(a_ix+b_iy)$ for some $c\in \RR$. Similarly, $\prod\limits_{i=1}^2(a_ix+b_iy)$ divides $f^{(1)}$ which implies that $f^{(1)}=0$. Now, by comparing the degree 2 terms in \eqref{mer-extactic}, we obtain
$$-f^{(0)}(x^2+y^2)+z(\gamma x-\beta y)=g^{(0)}\prod\limits_{i=1}^2(a_ix+b_iy).$$
This gives $\beta=\gamma=0$ since the coefficients of $xz$ and $yz$ on the right side are zero. Hence, $-f^{(0)}(x^2+y^2)=g^{(0)}\prod\limits_{i=1}^2(a_ix+b_iy)$, which is possible only when $f^{(0)}=g^{(0)}=0$. Therefore, $f=c \prod\limits_{i=1}^2(a_ix+b_iy)$. Observe that $c$ must be non-zero since the extactic polynomial $\mathcal{E}_W(\chi)$ is non-zero. One can check that the converse part also holds.
\end{proof}

\begin{proof}[\textbf{Proof of \Cref{mer-periodic}}]
Consider the cubic vector field $\chi$ on $\mathbb{T}^2$ given by \eqref{eq:cubic-vfld}. Suppose that $\chi$ has four invariant meridians given by the planes $a_ix+b_iy=0$ for $i=1,2$. Then, by Theorem \ref{4-mer}, we get $f=c\prod\limits_{i=1}^2(a_ix+b_iy)$ for some $c\in \RR\setminus \{0\}$ and $\beta=\gamma=0$. In cylindrical coordinates, the vector field $\chi$ becomes $(\dot{r},\dot{\theta},\dot{z})$, where
\begin{equation*}
    \begin{split}
        \dot{r}&=\frac{1}{4}rzK'(r\cos\theta,r\sin\theta,z),\\
        \dot{\theta}&=-f=-c\prod\limits_{i=1}^2(a_ir\cos \theta+b_ir\sin \theta),\\
        \dot{z}&=\frac{1}{2}K'(-a^2r^2+z^2+a^4-1).
    \end{split}
\end{equation*}
Suppose that $(r_0,\theta_0,z_0)\in \mathbb{T}^2$ is a singular point of $\chi$. Observe that $r_0\neq 0$. Hence, from $\dot{r}=0$, we obtain $zK'(r_0\cos \theta_0,r_0\sin \theta_0,z_0)=0$. If $K'(r_0\cos \theta_0,r_0\sin \theta_0,z_0)\neq 0$ then $z_0=0$. In this case, $(r_0^2-a^2)^2=1$ since $(r_0,\theta_0,z_0)\in \mathbb{T}^2$. Also, from $\dot{z}=0$, we get $-a^2r_0^2+a^4-1=0$ which gives $(r_0^2-a^2)^2=\frac{1}{a^4}$. Thus, $1=\frac{1}{a^4}$ which contradicts the fact that $a>1$. Hence, $K'(r_0\cos \theta_0,r_0\sin \theta_0,z_0)$ must be zero. Also, notice that $\dot{\theta}=0$ on the invariant meridians. So, a point on the invariant meridians is a singular point if and only if $K'=0$ at that point. It is well known that an invariant closed curve is a periodic orbit if and only if the curve does not contain any singular point. Hence, the invariant meridians are periodic orbits if and only if $K'=0$ has no solution on the meridians.

Suppose that the invariant meridians are periodic orbits. If $\Gamma$ is another periodic orbit, except for the invariant meridians, then it must lie between two consecutive invariant meridians. Hence, the $\theta$-component of $\Gamma$ must take a value twice, depending on the time. Therefore, by Rolle's theorem, $\dot{\theta}=0$ at some point on $\Gamma$. We observe that $\dot{\theta}=0$ only on the invariant meridians. So, there are no periodic orbits on $\mathbb{T}^2$ except the invariant meridians. Hence, the invariant meridians are limit cycles. Between two consecutive invariant meridians $\dot{\theta}$ is either positive or negative. Also, the sign of $\dot{\theta}$ changes only when we cross one invariant meridian because $$\dot{\theta}=-cr^2r_1r_2 \cos (\theta-\theta_1)\cos(\theta-\theta_2),$$ where $\theta_1,\theta_2\in \RR$ and $r_1,r_2>0$. So, the invariant meridians are stable or unstable limit cycles alternately.
\end{proof}
\begin{example}
    Consider the cubic vector field $\chi=(P,Q,R)$ with
    \begin{equation*}
        \begin{split}
            P&=\frac{1}{4}xz+xy^2,\\
            Q&=\frac{1}{4}yz-x^2y,\\
            R&=\frac{1}{2}(-a^2(x^2+y^2)+z^2+a^4-1).
        \end{split}
    \end{equation*}
By Theorem \ref{thm:cubic-vfld}, $\chi$ is a vector field on $\mathbb{T}^2$. Observe that $\chi$ has exactly four invariant meridians generated by the planes $x=0$ and $y=0$. Hence, by Theorem \ref{mer-periodic}, the invariant meridians are stable or unstable limit cycles alternately. Notice that  $\dot{\theta}=-\frac{r^2}{2}\sin 2\theta$.

\begin{figure}[H]
    \centering
    \begin{tikzpicture}
\draw(0,0) ellipse (2.3 and 1.5); 
\draw(0,0) ellipse (1.5 and 0.7); 

\draw (0,1.5) arc(90:270:0.2cm and 0.4cm); 
\draw[style=densely dashed] (0,1.5) arc(90:-90:0.2cm and 0.4cm); 

\draw (0,-0.7) arc(90:270:0.2cm and 0.4cm); 
\draw[style=densely dashed] (0,-0.7) arc(90:-90:0.2cm and 0.4cm); 

\draw (-1.5,0) arc(0:180:0.4cm and 0.2cm); 
\draw[style=densely dashed] (-1.5,0) arc(0:-180:0.4cm and 0.2cm); 

\draw (2.3,0) arc(0:180:0.4cm and 0.2cm); 
\draw[style=densely dashed] (2.3,0) arc(0:-180:0.4cm and 0.2cm); 

\draw[red, <-] (0,0) [partial ellipse=13:93:1.8cm and 1cm];
\draw[red, <-] (0,0) [partial ellipse=11:93:2cm and 1.25cm];

\draw[red, <-] (0,0) [partial ellipse=167:97:1.8cm and 1cm];
\draw[red, <-] (0,0) [partial ellipse=168:98:2cm and 1.25cm];

\draw[red, <-] (0,0) [partial ellipse=5:-93:1.8cm and 1cm];
\draw[red, <-] (0,0) [partial ellipse=5:-93:2cm and 1.25cm];

\draw[red, <-] (0,0) [partial ellipse=-184:-97:1.8cm and 1cm];
\draw[red, <-] (0,0) [partial ellipse=-184:-97:2cm and 1.25cm];
\end{tikzpicture}
    \caption{The left and right meridians are stable limit cycles, whereas the top and bottom meridians are unstable limit cycles.}
    \label{fig:fig1}
\end{figure}
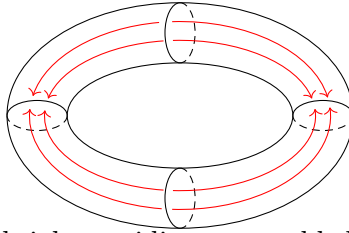
\end{example}
\begin{proof}[\textbf{Proof of \Cref{kolm-mer-par}}]
Consider the cubic Kolmogorov vector field $\chi=(P,Q,R)$ on $\mathbb{T}^2$ given by \eqref{eq:cubic-kolm}. Then the extactic polynomial of $\chi$ associated with $\langle x,y\rangle$ is
    $$\mathcal{E}_{\langle x,y\rangle}(\chi)=\begin{vmatrix}
        x&y\\
        \frac{1}{4}Kx+c_1xy^2& \frac{1}{4}Ky-c_1x^2y
    \end{vmatrix}=-c_1xy(x^2+y^2).$$ The only divisors of the form $ax+by$ of the above extactic polynomial are $x$ and $y$. Observe that $x=0$ and $y=0$ are invariant planes of $\chi$. By Proposition \ref{extactic-polynomial}, these are the only invariant meridian planes of $\chi$.

    Note that $z-k=0$ is an invariant plane of $\chi$ if and only if $z-k$ divides $R$. But the only divisor of $R$ of the form is $z-k$ is $z$. Hence, $z=0$ is the only invariant parallel plane of $\chi$.
\end{proof}
\begin{remark}
    For a cubic Kolmogorov vector field on $\mathbb{T}^2$, no invariant meridian or parallel is a periodic orbit since each one contains some singular points (e.g. $(0,\pm \sqrt{a^2\pm 1},0)$ or $(\pm \sqrt{a^2\pm 1},0,0)$).
\end{remark}
\begin{proposition}\label{pseudo-2-mer}
A non-zero Lie bracket of two pseudo-type-2 vector fields on $\mathbb{T}^2$ can have a maximum of two invariant meridians.
\end{proposition}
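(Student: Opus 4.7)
The plan is to compute the Lie bracket of two arbitrary pseudo-type-2 vector fields on $\mathbb{T}^2$ explicitly, identify the specific shape of the coefficient $C$ in the resulting pseudo-type-3 vector field $[X,Y]=(Cy,-Cx,0)$ (cf.\ \Cref{lie-pseudo-2}), and then apply the extactic polynomial technique used in \Cref{ch-invariant-mer} to limit the number of linear factors of the form $\alpha x+\beta y$ that can divide $C$.

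By \Cref{thm:hom-vfld} every pseudo-type-2 vector field on $\mathbb{T}^2$ has the form $(Ay,-Ax,0)$ for some homogeneous linear polynomial $A\in \RR[x,y,z]$. Write
$X=(Ay,-Ax,0)$ and $Y=(By,-Bx,0)$ with
$A=a_1x+a_2y+a_3z$ and $B=b_1x+b_2y+b_3z$.
First, I would compute $[X,Y]$ directly from the formula
$[X,Y]^i=X^j\partial_j Y^i - Y^j\partial_j X^i$
(the paper's SageMath snippet can serve as a check). Using the fact that the partial derivatives of $A$ and $B$ are constants and writing $p_{ij}:=a_ib_j-a_jb_i$ for the $2\times 2$ minors of the coefficient matrix, a short calculation gives
\begin{equation*}
    [X,Y]=(Cy,-Cx,0),\qquad C=-p_{12}(x^2+y^2)+z\bigl(p_{23}x-p_{13}y\bigr).
\end{equation*}
Note that $[X,Y]\neq 0$ is equivalent to at least one of $p_{12},p_{13},p_{23}$ being nonzero.

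Next, I would apply \Cref{ch-invariant-mer}: the extactic polynomial of $[X,Y]=(Cy,-Cx,0)$ associated with $\langle x,y\rangle$ equals $-C(x^2+y^2)$, and hence every invariant meridian plane $\alpha x+\beta y=0$ must have $\alpha x+\beta y$ as a factor of $C$ (since $x^2+y^2$ has no real linear factor). The key trick is to restrict to $z=0$: if $\alpha x+\beta y$ divides $C$, then it divides
\begin{equation*}
C\big|_{z=0}=-p_{12}(x^2+y^2).
\end{equation*}
If $p_{12}\neq 0$, this forces $\alpha x+\beta y$ to divide $x^2+y^2$ over $\RR$, which is impossible; consequently $[X,Y]$ has no invariant meridians in this case. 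If $p_{12}=0$, then $C=z(p_{23}x-p_{13}y)$, and since $[X,Y]\neq 0$ the factor $p_{23}x-p_{13}y$ is a nonzero linear polynomial in $x,y$; it is the \emph{only} linear factor of $C$ of the form $\alpha x+\beta y$, and it occurs with multiplicity one (the other factor $z$ is not of this form, and $C$ is not a real square of a linear form in $x,y$).

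Thus $C$ admits at most one factor of the form $\alpha x+\beta y$ (counted with multiplicity), which yields at most one meridian plane, i.e., at most two invariant meridians. The bound is sharp: for instance, choosing $A=x$ and $B=z$ gives $p_{12}=p_{23}=0$, $p_{13}=1$, so $C=-yz$, and the plane $y=0$ produces exactly two invariant meridians. The main obstacle is simply the bookkeeping in computing $C$; once its structure is available, the restriction $z=0$ immediately collapses the extactic factorization to the desired bound.
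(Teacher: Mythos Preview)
Your proof is correct and follows essentially the same approach as the paper: both compute the Lie bracket explicitly to obtain $C=(a_2b_1-a_1b_2)(x^2+y^2)+(a_2b_3-a_3b_2)xz+(a_3b_1-a_1b_3)yz$, then use the extactic polynomial $\mathcal{E}_{\langle x,y\rangle}([X,Y])=-C(x^2+y^2)$ to conclude that $C$ admits at most one factor of the form $\alpha x+\beta y$. The paper simply asserts this last factorization claim (``one can check''), whereas you supply the argument via the restriction $z=0$ and add a sharpness example, so your write-up is strictly more detailed but not a different route.
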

\begin{proof}
    Suppose that $X=(Ay,-Ax,0)$ and $Y=(By,-Bx,0)$ are two pseudo-type-2 vector fields on $\mathbb{T}^2$ with $A=a_1x+a_2y+a_3z$ and $B=b_1x+b_2y+b_3z$. Then $[X,Y]=(Cy,-Cx,0)$ where $C=(a_2b_1-a_1b_2)(x^2+y^2)+(a_2b_3-a_3b_2)xz+(a_3b_1-a_1b_3)yz$. Hence, the extactic polynomial  $\mathcal{E}_{\langle x,y\rangle}([X,Y])=-C(x^2+y^2)$. One can check that $C$ can have at most one factor of the form $ax+by$. Hence, the result follows.
\end{proof}

Note that any parallel plane $\{z-k=0\}$ is invariant for a pseudo-type-$n$ vector field on $\mathbb{T}^2$.
\begin{proof}[\textbf{Proof of \Cref{parallel-periodic}}]
Suppose that $\chi$ is given by \eqref{eq:cubic-vfld} and $z-k_1=0$, $z-k_2=0$ are two invariant parallel planes. Then, $(z-k_1)(z-k_2)$ divides $R$ where $$R=\frac{1}{2}K'(-a^2(x^2+y^2)+z^2+a^4-1)-2(\beta x+\gamma y)(x^2+y^2-a^2).$$ Assume that $\frac{1}{2}K'=px+qy+rz+s$. So,
\begin{equation}\label{parallel-div}
   (px+qy+rz+s)(-a^2(x^2+y^2)+z^2+a^4-1)-2(\beta x+\gamma y)(x^2+y^2-a^2)=L \prod\limits_{i=1}^2(z-k_i)
\end{equation}
for some linear polynomial $L\in \RR[x,y,z]$. We compare the coefficients of $x^3,y^3$ in \eqref{parallel-div} and we get $-a^2p-2\beta=0$ and $-a^2q-2\gamma=0$, respectively. This gives $\beta=-\frac{a^2p}{2}$ and $\gamma=-\frac{a^2q}{2}$. Also, by comparing the coefficients of $x^2z,x^2$ in \eqref{parallel-div}, we get $r=s=0$. Hence, \eqref{parallel-div} becomes
\begin{equation*}\label{parallel-div-1}
     (px+qy)(z^2-1)=L\prod\limits_{i=1}^2(z-k_i).
\end{equation*}
This implies that $L=px+qy,k_1=1,$ and $k_2=-1$. Also, $K=K'z=2(px+qy)z$. Hence, we get $P,Q,$ and $R$ as mentioned in \eqref{1.4}.

One can check that $z-1=0$ and $z+1=0$ are the only invariant parallel planes of $\chi$. Now, we obtain the necessary and sufficient condition for the parallel on  $z-1=0$ to be a periodic orbit. The discussion for the parallel on $z+1=0$ is similar. In cylindrical coordinates the vector field $\chi$ becomes $(\dot{r},\dot{\theta},\dot{z})$ where
\begin{equation*}
    \begin{split}
        \dot{r}&=\frac{r^2-a^2}{2}(p\cos \theta+q\sin \theta)z,\\
        \dot{\theta}&=-f+\frac{a^2}{2r^2}(pr\sin \theta-qr\cos \theta)z,\\
        \dot{z}&=r(p\cos \theta+q\sin \theta)(z^2-1).
    \end{split}
\end{equation*}
Since $z-1=0$ is invariant, thus the parallel generated by this plane is a periodic orbit if and only if it does not contain any singular point. Suppose that $(r_0,\theta_0,z_0)\in \mathbb{T}^2$ lies on $z-1=0$. Hence, $z_0=1$ and $r_0=a$. At $(a,\theta_0,1)$, $\dot{r}=\dot{z}=0$. Observe that $\dot{\theta}=0$ at $(a,\theta_0,1)$ if and only if $f(a\cos \theta_0,a\sin \theta_0,1)=\frac{1}{2}(pa\sin \theta_0-qa\cos \theta_0)$. Hence, the result follows. 
\end{proof}
\begin{theorem}\label{infinite-par}
    Suppose that $\chi=(P,Q,R)$ is a cubic vector field on $\mathbb{T}^2$. Then $\chi$ has infinitely many invariant parallels if and only if $P=fx,~Q=-fy,~R=0$ for some quadratic polynomial $f\in \RR[x,y,z]$.
\end{theorem}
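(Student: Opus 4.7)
The plan is to reduce the biconditional to showing $R=0$ and then invoke \Cref{thm:cubic-vfld} to pin down $P$ and $Q$. For the converse direction, if $(P,Q,R)$ has the stated form then direct substitution into the torus invariance equation
\[
4(x^2+y^2-a^2)(Px+Qy)+2Rz=K\bigl((x^2+y^2-a^2)^2+z^2-1\bigr)
\]
exhibits $\chi$ as a vector field on $\mathbb{T}^2$ with cofactor $K=0$, and since $\chi(z-k)=R=0$ for every $k\in\RR$, every parallel plane $z=k$ is invariant, supplying infinitely many invariant parallels.

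For the forward direction, I would first prove $R=0$. A plane $z=k$ is invariant if and only if $(z-k)\mid R$ in $\RR[x,y,z]$, equivalently if and only if $R(x,y,k)\equiv 0$ as a polynomial in $x,y$. Writing $R=\sum_{j=0}^{3}R_j(x,y)\,z^j$ with $R_j\in\RR[x,y]$, the condition reads $\sum_j R_j(x,y)k^j=0$ identically in $x,y$; reading off the coefficient of any fixed monomial $x^ay^b$ yields a polynomial in $k$ of degree at most three with $k$ as a root. Infinitely many invariant parallels supply infinitely many such values of $k$, so each of these polynomials vanishes identically; hence every coefficient of $R$ is zero, i.e.\ $R=0$.

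With $R=0$, the expression for $R$ in \Cref{thm:cubic-vfld} collapses to
\[
\tfrac{1}{2}K'\bigl(-a^2(x^2+y^2)+z^2+a^4-1\bigr)=2(\beta x+\gamma y)(x^2+y^2-a^2).
\]
Put $Q_1:=-a^2(x^2+y^2)+z^2+a^4-1$ and $Q_2:=x^2+y^2-a^2$. The identity $Q_1+a^2Q_2=z^2-1$ together with $\gcd(z^2-1,Q_2)=1$ yields $\gcd(Q_1,Q_2)=1$, so $Q_2\mid K'$. But $K=K'z$ has $\deg K\le 2$, hence $\deg K'\le 1<2=\deg Q_2$, which forces $K'=0$. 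Then $(\beta x+\gamma y)Q_2=0$ gives $\beta=\gamma=0$. Substituting $K=0$ and $\beta=\gamma=0$ in \eqref{eq:cubic-vfld} produces the claimed expressions for $(P,Q,R)$ with $\deg f\le 2$. The main obstacle is this coprimality-plus-degree-count step that excludes a nonzero $K'$; the remaining steps are clean polynomial algebra together with a direct verification of the converse.
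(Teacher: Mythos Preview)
Your proof is correct and follows essentially the same route as the paper's: reduce to $R=0$, then use the classification \eqref{eq:cubic-vfld} together with a divisibility-plus-degree argument to force $K'=0$ and $\beta=\gamma=0$. The only cosmetic differences are that you justify $R=0$ explicitly (the paper asserts it), and that you argue $\gcd(Q_1,Q_2)=1$ directly whereas the paper rewrites $R=0$ as $\tfrac{1}{2}K'(z^2-1)=(x^2+y^2-a^2)\bigl(\tfrac{a^2}{2}K'+2(\beta x+\gamma y)\bigr)$ before invoking the same coprimality of $z^2-1$ and $x^2+y^2-a^2$.
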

\begin{proof}
    Suppose that $\chi$ is given by \eqref{eq:cubic-vfld} and it has infinitely many invariant parallels. Then, we must have
    $$R=\frac{1}{2}K'(-a^2(x^2+y^2)+z^2+a^4-1)-2(\beta x+\gamma y)(x^2+y^2-a^2)=0.$$ Thus,
$$\frac{1}{2}K'(z^2-1)=(x^2+y^2-a^2)(\frac{a^2}{2}K'+2(\beta x+\gamma y)).$$
    So, $x^2+y^2-a^2$ divides $K'$ and this is possible only when $K'=0$ since $\deg K'\leq 1$. Consequently, $K=0$ and $\beta=\gamma=0$. So, we get $P=fy,~Q=-fx$.
    
One can check that the converse part is also true since $R=0$.
\end{proof}

\section{Singularities of polynomial vector fields on $\mathbb{T}^2$}\label{sec:singularity}
In this section, we show the non-existence of singular points of some polynomial vector fields on $\mathbb{T}^2$. We also classify the singular points of a large class of polynomial vector fields on $\mathbb{T}^2$.

\begin{proof}[\textbf{Proof of \Cref{thm:quad-singular-pt}}]
    Consider a quadratic vector field $\chi=(P,Q,R)$ on $\mathbb{T}^2$. Then, by \Cref{quad-classification},
$$P=\frac{\alpha}{4} xz+fy,~Q=\frac{\alpha}{4} yz-fx,~R=\frac{\alpha}{2}(-a^2(x^2+y^2)+z^2+a^4-1)$$
for some $\alpha\in \RR$ and a linear polynomial $f$. If $\alpha=0$ then $R$ is identically zero. So, we assume that $\alpha \neq 0$. Let $(x_0,y_0,z_0)\in \mathbb{T}^2$ is a singular point of $\chi$. Then $(x_0^2+y_0^2-a^2)^2+z_0^2=1$. Moreover, $R(x_0,y_0,z_0)=0$ gives $$
(x_0^2+y_0^2)(x_0^2+y_0^2-a^2)=0.$$
If $x_0^2+y_0^2=0$, then $z_0^2=1-a^4$. But this is not possible since $a>1$. Hence, $x_0^2+y_0^2=a^2$ and $z_0^2=1$. Note that $P(x_0,y_0,z_0)y_0-Q(x_0,y_0,z_0)x_0=0$ which implies that $(x_0^2+y_0^2)f(x_0,y_0,z_0)=0$. So, $f(x_0,y_0,z_0)=0$. As $P(x_0,y_0,z_0)=Q(x_0,y_0,z_0)=0$, thus $x_0=y_0=0$, which contradicts the fact that $x_0^2+y_0^2=a^2$. Therefore, $\chi$ does not have any singular point on $\mathbb{T}^2$.
\end{proof}

\begin{proof}[\textbf{Proof of \Cref{prop:sing-pt}}]
Suppose that $(x_0,y_0,z_0)\in \mathbb{T}^2$ is a singular point of $\chi$. Then, $A(x_0,y_0,z_0)y_0=0$ and $A(x_0,y_0,z_0)x_0=0$. Hence, $A(x_0,y_0,z_0)=0$ since $x_0$ and $y_0$ cannot be simultaneously zero. Hence, the statement is proved.
\end{proof}
\begin{remark}
    A degree one vector field on $\mathbb{T}^2$ has no singular point on $\mathbb{T}^2$.
\end{remark}

We now study the isolated singularities of the vector field $(Ay,-Ax,0)$ on $\mathbb{T}^2$ where $A\in \RR[x,y,z]$. By \Cref{prop:sing-pt}, the vector field $\chi=(Ay,-Ax,0)$ has isolated singularities on $\mathbb{T}^2$ if and only if the hypersurface $\{A=0\}$ intersects $\mathbb{T}^2$ at isolated points.


We consider the projections 
$$p_1:\{(x,y,z)\in \mathbb{T}^2: z>0\}\to M~\mbox{and}~p_2:\{(x,y,z)\in \mathbb{T}^2: z<0\}\to M$$
from the upper and lower open half of the torus, respectively, onto the open annulus ${M:=\{(x,y)\in \RR^2: a^2-1< x^2+y^2< a^2+1\}}$, defined by 
$$p_1(x,y,z)=(x,y)~\mbox{and}~p_2(x,y,z)=(x,y).$$
Observe that $p_1$ and $p_2$ are diffeomorphisms with $$p_1^{-1}(x,y)=(x,y,\sqrt{1-(x^2+y^2-a^2)^2})~\mbox{and}~p_2^{-1}(x,y)=(x,y,-\sqrt{1-(x^2+y^2-a^2)^2}).$$ We denote the push-forwards of a vector field $\chi$ on $\mathbb{T}^2$ as $p_1(\chi)$ and $p_2(\chi)$ under the diffeomorphisms $p_1$ and $p_2$, respectively.

Suppose that $\mathcal{X}=(P,Q)$ is a planar vector field with an isolated singular point $p\in \RR^2$. We denote the jacobian matrix of the vector field $\mathcal{X}$ at $p$  as $J_{p}(\mathcal{X})=\begin{bmatrix}
    \frac{\partial P}{\partial x}(p) & \frac{\partial P}{\partial y}(p)\\
    \frac{\partial Q}{\partial x}(p) & \frac{\partial Q}{\partial y}(p) 
\end{bmatrix}$. We recall the following from \cite{DuLlAr06}.
\begin{definition}
    The singular point $p$ is called
    \begin{enumerate}
        \item semi-hyperbolic if $J_p(\mathcal{X})$ has exactly one non-zero eigenvalue,
        \item nilpotent if $J_p(\mathcal{X})$ is a non-zero matrix but both of its eigenvalues are zero,
        \item linearly zero if $J_p(\mathcal{X})$ is the zero matrix.
    \end{enumerate}
\end{definition}
Note that a semi-hyperbolic singular point is characterized in \cite[Theorem 2.19]{DuLlAr06}, a nilpotent singular point is characterized in \cite[Theorem 3.5]{DuLlAr06}. The local phase portrait of a linearly zero singular point can be studied  using the blow up technique, see \cite{alvarez2011survey} and \cite[Chapter 3]{DuLlAr06}.

Next, we classify the isolated singular points of the planar vector fields $p_1(\chi)$ and $p_2(\chi)$, where $\chi=(Ay,-Ax,0)$. Notice that $p_1$ and $p_2$ are orthogonal projections to the $xy$ plane. So, if we have the the local structure of trajectories of $p_1(\chi)$ (resp. $p_2(\chi)$) around the singular point $p_1(q)$ (resp. $p_2(q)$) for some $q\in \mathbb{T}^2$ then we can lift the structure on $\mathbb{T}^2$ to understand the local structure of trajectories of $\chi$ on $\mathbb{T}^2$ around $q$ since $\mathbb{T}^2$ is a 2-dimensional smooth manifold.
\begin{proposition}\label{thm:ch-sing-pt}
    Suppose that $q=(x_0,y_0,z_0)$ with $z_0>0$ (resp. $z_0<0$) is an isolated singular point on $\mathbb{T}^2$ of the vector field $\chi=(Ay,-Ax,0)$, where $A\in \RR[x,y,z]$. Then, the following holds. 
    \begin{enumerate}[(i)]
        \item If $div(p_1(\chi))\neq 0$ $($resp. $div(p_2(\chi))\neq 0)$ at $p_1(q)$ $($resp. $p_2(q))$, then $p_1(q)$ $($resp. $p_2(q))$ is a semi-hyperbolic singular point of $p_1(\chi)$ $($resp. $p_2(\chi))$.
        \item  If $div(p_1(\chi))= 0$ $($resp. $div(p_2(\chi))=0)$ at $p_1(q)$ $($resp. $p_2(q))$, then $p_1(q)$ $($resp. $p_2(q))$ is either a nilpotent or a linearly zero singular point of $p_1(\chi)$ $($resp. $p_2(\chi))$.
    \end{enumerate}
\end{proposition}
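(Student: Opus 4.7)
The plan is to compute the push-forward vector field $p_1(\chi)$ explicitly and then extract its linear part at the singular point. Since $p_1$ is the orthogonal projection $(x,y,z)\mapsto(x,y)$ with inverse $p_1^{-1}(x,y)=(x,y,\sqrt{1-(x^2+y^2-a^2)^2})$, and $\chi=(Ay,-Ax,0)$ has vanishing $z$-component, the push-forward reduces to
$$p_1(\chi)(x,y) \;=\; \bigl(\widetilde A(x,y)\,y,\; -\widetilde A(x,y)\,x\bigr), \qquad \widetilde A(x,y) := A\bigl(x,y,\sqrt{1-(x^2+y^2-a^2)^2}\bigr).$$
By \Cref{prop:sing-pt}, the singular condition $\chi(q)=0$ with $q\in\mathbb{T}^2$ forces $A(q)=0$, hence $\widetilde A(x_0,y_0)=0$.

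Next I would compute the Jacobian of $p_1(\chi)$ at $(x_0,y_0)$. Because $\widetilde A$ vanishes there, the two terms containing $\widetilde A$ itself drop out, and the Jacobian collapses to
$$J_{p_1(q)}(p_1(\chi)) \;=\; \begin{bmatrix} \widetilde A_x\, y_0 & \widetilde A_y\, y_0 \\ -\widetilde A_x\, x_0 & -\widetilde A_y\, x_0 \end{bmatrix},$$
with partials evaluated at $(x_0,y_0)$. Its two rows are the common vector $(\widetilde A_x,\widetilde A_y)$ scaled by $y_0$ and $-x_0$, hence linearly dependent, so $\det J_{p_1(q)}(p_1(\chi))=0$ automatically. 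Consequently $0$ is always an eigenvalue and the remaining eigenvalue equals the trace $\widetilde A_x y_0-\widetilde A_y x_0 = \operatorname{div}(p_1(\chi))(p_1(q))$.

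The dichotomy in the statement then falls out immediately. In case (i) the trace is non-zero, so the Jacobian has exactly one non-zero eigenvalue and $p_1(q)$ is semi-hyperbolic by definition. In case (ii) the trace vanishes, so both eigenvalues are zero, and $p_1(q)$ is nilpotent or linearly zero according as the above matrix is non-zero or zero. The proof for $p_2$ is identical after replacing the positive square root with the negative one in $\widetilde A$. There is no substantial obstacle: the whole argument rests on the algebraic observation that $p_1(\chi)$ is pointwise proportional to the vector field $(y,-x)$, which forces the Jacobian determinant at any singular point to vanish. The only minor point worth recording is that $p_1(q)$ is indeed isolated as a singular point of $p_1(\chi)$, which is automatic since $p_1$ restricts to a diffeomorphism between the upper open half-torus and $M$.
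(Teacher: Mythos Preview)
Your argument is correct and essentially identical to the paper's own proof: both compute the push-forward $p_1(\chi)=(\widetilde A y,-\widetilde A x)$, use $A(q)=0$ to simplify the Jacobian at $(x_0,y_0)$ to a rank-at-most-one matrix, and then read off the dichotomy from whether the trace (the divergence) vanishes. The only cosmetic differences are notation ($\widetilde A$ versus the paper's $B$) and that you explicitly invoke \Cref{prop:sing-pt} and the diffeomorphism property of $p_1$ to justify isolatedness, which the paper states without citation.
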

\begin{proof}
    We prove the statement for $z_0>0$. The proof for $z_0<0$ similarly follows.

    Notice that $p_1(\chi)=(By,-Bx)$ where $B=A(x,y,\sqrt{1-(x^2+y^2-a^2)^2})$. The jacobian matrix for the vector field $p_1(\chi)$ at $(x,y)$ is
    $$J_{(x,y)}(p_1(\chi))=\begin{bmatrix}
    B_x y & B_y y+B\\
    -B_x x-B & -B_y x
\end{bmatrix}.$$
As $(x_0,y_0,z_0)$ is an isolated singular point of $\chi$ on $\mathbb{T}^2$, $(x_0,y_0)$ is an isolated singular point of $p_1(\chi)$. Moreover, $A(x_0,y_0,z_0)=0$ and hence $B(x_0,y_0)=0$. So,
$$J_{(x_0,y_0)}(p_1(\chi))=\begin{bmatrix}
    B_x(x_0,y_0) y_0 & B_y(x_0,y_0) y_0\\
    -B_x(x_0,y_0) x_0 & -B_y(x_0,y_0) x_0\end{bmatrix}.$$

We see that $\det(J_{(x_0,y_0)}(p_1(\chi)))=0$. If $div(p_1(\chi))\neq 0$ at $p_1(q)$ then $tr(J_{(x_0,y_0)}(p_1(\chi)))\neq 0$. So, $p_1(q)$ is a semi-hyperbolic  singular point of $p_1(\chi)$.

Also, if $div(p_1(\chi))= 0$ at $p_1(q)$ then $tr(J_{(x_0,y_0)}(p_1(\chi)))= 0$. So, in this case, $p_1(q)$ is either a nilpotent or a linearly zero singular point.
\end{proof}

\noindent {\bf Acknowledgments.} 
The author thanks the anonymous reviewers for several helpful comments and suggestions, which have improved the quality of the paper. The author is grateful to Soumen Sarkar for encouragement, fruitful discussions, and helpful comments after going through the initial draft of this paper. The author thanks Joji Benny for helpful discussions. The author is supported by the Prime Minister's Research Fellowship, Government of India.
\bibliographystyle{abbrv}
\bibliography{biblio.bib}

\begin{thebibliography}{10}

\bibitem{alvarez2011survey}
M.~J. {\'A}lvarez, A.~Ferragut, and X.~Jarque.
\newblock A survey on the blow up technique.
\newblock {\em International Journal of Bifurcation and Chaos},
  21(11):3103--3118, 2011.

\bibitem{BeSa22}
J.~Benny and S.~Sarkar.
\newblock Invariant hyperplane sections of vector fields on the product of
  spheres.
\newblock {\em Journal of Dynamical and Control Systems}, 30(2), 2024.

\bibitem{BoLlVa13}
Y.~Bolaños, J.~Llibre, and C.~Valls.
\newblock On the number of invariant conics for the polynomial vector fields
  defined on quadrics.
\newblock {\em Bulletin des Sciences Mathématiques}, 137(6):746--774, 2013.

\bibitem{ChLlPe07}
C.~Christopher, J.~Llibre, and J.~V. Pereira.
\newblock Multiplicity of invariant algebraic curves in polynomial vector
  fields.
\newblock {\em Pacific Journal of Mathematics}, 229(1):63--117, 2007.

\bibitem{darboux1878memoire}
G.~Darboux.
\newblock M{\'e}moire sur les {\'e}quations diff{\'e}rentielles alg{\'e}briques
  du premier ordre et du premier degr{\'e}.
\newblock {\em Bulletin des sciences math{\'e}matiques et astronomiques},
  2(1):151--200, 1878.

\bibitem{DuLlAr06}
F.~Dumortier, J.~Llibre, and J.~C. Art{\'e}s.
\newblock {\em Qualitative theory of planar differential systems}, volume~2.
\newblock Springer, 2006.

\bibitem{gutierrez2002darbouxian}
C.~Gutiérrez and J.~Llibre.
\newblock Darbouxian integrability for polynomial vector fields on the
  2-dimensional sphere.
\newblock {\em Extracta Mathematicae}, 17(2):289--301, 2002.

\bibitem{hilbert1900}
D.~Hilbert.
\newblock Mathematische probleme.
\newblock {\em G{\"o}ttinger Nachrichten}, 3:253--297, 1900.

\bibitem{hirsch1989systems}
M.~W. Hirsch.
\newblock Systems of differential equations that are competitive or
  cooperative. v. convergence in 3-dimensional systems.
\newblock {\em Journal of differential equations}, 80(1):94--106, 1989.

\bibitem{huang2018meromorphic}
K.~Huang, S.~Shi, and W.~Li.
\newblock Meromorphic and formal first integrals for the {Lorenz} system.
\newblock {\em Journal of Nonlinear Mathematical Physics}, 25(1):106--121,
  2018.

\bibitem{ilyashenko2002}
Y.~Ilyashenko.
\newblock Centennial history of {Hilbert’s} 16th problem.
\newblock {\em Bulletin of the American Mathematical Society}, 39(3):301--354,
  2002.

\bibitem{MR1334338}
Y.~Ilyashenko and S.~Yakovenko.
\newblock {\em Concerning the {H}ilbert 16th problem}, volume 165 of {\em
  American Mathematical Society Translations, Series 2}.
\newblock American Mathematical Society, Providence, RI, 1995.

\bibitem{JaSa23}
S.~Jana and S.~Sarkar.
\newblock Quadratic, {Homogeneous} and {Kolmogorov} vector fields on {$
  S^1\times S^2$ and $S^2\times S^1$}.
\newblock {\em arXiv preprint arXiv:2307.09439}, 2023.

\bibitem{kolmogorov1936sulla}
A.~Kolmogorov.
\newblock Sulla teoria di volterra della lotta per lesistenza.
\newblock {\em Gi. Inst. Ital. Attuari}, 7:74--80, 1936.

\bibitem{li2003hilbert}
J.~Li.
\newblock Hilbert's 16th problem and bifurcations of planar polynomial vector
  fields.
\newblock {\em International Journal of Bifurcation and Chaos}, 13(01):47--106,
  2003.

\bibitem{llibre2021integrability}
J.~Llibre.
\newblock Integrability and {Limit Cycles via First Integrals}.
\newblock {\em Symmetry}, 13(9):1736, 2021.

\bibitem{llibre2012rational}
J.~Llibre and Y.~Bolanos.
\newblock Rational first integrals for polynomial vector fields on algebraic
  hypersurfaces of {$\Bbb R^{n+1}$}.
\newblock {\em International Journal of Bifurcation and Chaos}, 22(11):1250270,
  2012.

\bibitem{LlMe07}
J.~Llibre and J.~C. Medrado.
\newblock On the invariant hyperplanes for $d$-dimensional polynomial vector
  fields.
\newblock {\em Journal of Physics A: Mathematical and Theoretical},
  40(29):8385, 2007.

\bibitem{LlMe11}
J.~Llibre and J.~C. Medrado.
\newblock Limit cycles, invariant meridians and parallels for polynomial vector
  fields on the torus.
\newblock {\em Bulletin des sciences mathematiques}, 135(1):1--9, 2011.

\bibitem{LM18}
J.~Llibre and A.~C. Murza.
\newblock Darboux theory of integrability for polynomial vector fields on
  {$\Bbb S^n$}.
\newblock {\em Dyn. Syst.}, 33(4):646--659, 2018.

\bibitem{LlMu21}
J.~Llibre and A.~C. Murza.
\newblock Polynomial vector fields on the {Clifford} torus.
\newblock {\em International Journal of Bifurcation and Chaos}, 31(04):2150057,
  2021.

\bibitem{LlRe13}
J.~Llibre and S.~Rebollo-Perdomo.
\newblock Invariant parallels, invariant meridians and limit cycles of
  polynomial vector fields on some 2-dimensional algebraic tori in
  {$\mathbb{R}^3$}.
\newblock {\em Journal of dynamics and differential equations}, 25:777--793,
  2013.

\bibitem{LlZh02}
J.~Llibre and X.~Zhang.
\newblock Darboux integrability of real polynomial vector fields on regular
  algebraic hypersurfaces.
\newblock {\em Rendiconti del circolo matematico di Palermo}, 51(1):109--126,
  2002.

\bibitem{LlZh09}
J.~Llibre and X.~Zhang.
\newblock Darboux theory of integrability in {$\mathbb{C}^n$} taking into
  account the multiplicity.
\newblock {\em Journal of Differential Equations}, 246(2):541--551, 2009.

\bibitem{llibre2010rational}
J.~Llibre and X.~Zhang.
\newblock Rational first integrals in the {Darboux} theory of integrability in
  {$\Bbb C^n$}.
\newblock {\em Bulletin des sciences mathematiques}, 134(2):189--195, 2010.

\bibitem{LlZh11}
J.~Llibre and Y.~Zhao.
\newblock On the polynomial vector fields on {$\mathbb{S}^2$}.
\newblock {\em Proceedings of the Royal Society of Edinburgh Section A},
  141A:055–1069, 2011.

\bibitem{valls2019liouvillian}
C.~Valls.
\newblock Liouvillian integrability of some quadratic {Li{\'e}nard} polynomial
  differential systems.
\newblock {\em Rendiconti del Circolo Matematico di Palermo Series 2},
  68:499--519, 2019.

\end{thebibliography}
\end{document}